\newcommand\blfootnote[1]{%
  \begingroup
  \renewcommand\thefootnote{}\footnote{#1}%
  \addtocounter{footnote}{-1}%
  \endgroup
}
\newtheorem{theorem}{Theorem}[section]
\newtheorem{corollary}{Corollary}[section]
\newtheorem{proposition}{Proposition}[section]
\newtheorem{lemma}{Lemma}[section]
\theoremstyle{definition}
\newtheorem{definition}{Definition}[section]
\newtheorem{example}{Example}[section]
\newtheorem{remark}{Remark}[section]
\numberwithin{equation}{section}
\numberwithin{theorem}{section}
\newcommand{\N}{\mathbb{N}}
\newcommand{\fm}{\mathfrak{m}}
\newcommand{\C}{\mathbb{C}}
\newcommand{\Q}{\mathbb{Q}}
\newcommand{\Z}{\mathbb{Z}}
\newcommand{\PP}{\mathbb{P}}
\newcommand{\V}{\mathbf{V}}
\newcommand{\K}{\mathbb{K}} 
\newcommand{\Res}{\mathrm{Res}}
\newcommand{\val}{\mathrm{val}}
\newcommand{\lc}{\mathrm{lc}}
\newcommand{\co}{\hskip.85pt{:}\hskip.85pt}
\newcommand{\Rf}{R_{\text{\it\bfseries f}\hskip.9pt}}
\newcommand{\Rfe}{R_{\text{\it\bfseries f}_\varepsilon\hskip.8pt}}
\newcommand{\Aff}{\mathbb{A}}
\newcommand{\calO}{\mathcal{O}}
\newcommand{\fp}{\mathfrak{p}}
\begin{document}

\title{Subresultants and the Shape Lemma}

\author{David A. Cox}
\address{Department of Mathematics \& Statistics, 31 Quadrangle Drive, Amherst College, Amherst, MA 01002, USA}
\email{dacox@amherst.edu}
\urladdr{https://dacox.people.amherst.edu/}

\author{Carlos D'Andrea}
\address{Departament de Matem\`atiques i Inform\`atica, Universitat de Barcelona. Gran
Via 585, 08007 Barcelona, Spain \&  Centre de Recerca Matem\`atica, Edifici C, Campus Bellaterra, 08193 Bellaterra, Spain
}
\email{cdandrea@ub.edu}
\urladdr{http://www.ub.edu/arcades/cdandrea.html}

\begin{abstract}
In nice cases, a zero-dimensional complete intersection ideal over a field  
has a Shape Lemma.  There are also cases where the ideal is generated by the resultant and first subresultant polynomials of the generators.  This paper explores the relation between these representations and studies when the resultant generates the elimination ideal.  We also prove a Poisson formula for resultants arising from the hidden variable method.
\end{abstract}

\blfootnote{This paper is dedicated to Teresa Krick on the occasion of the TeresaFest 2021 conference in her honor.}

\subjclass[2010]{Primary 13P10; Secondary 13P15}

\keywords{Shape Lemma, resultant, subresultant, Poisson formula}

\maketitle

\bigskip
\section{Introduction}
\label{introsec}

This paper will study the relation between the Shape Lemma and subresultants.  We begin with an example that introduces some of the key players:

\begin{example}
\label{introex}
In the polynomial ring $\C[x_1,x_2]$, let
\[
f_1 = x_1^2 - x_2-1,\ f_2 = x_1^2 + x_1x_2-2 .
\]
Computing a lex Gr\"obner basis of $I = \langle f_1,f_2\rangle$ with $x_2 \prec x_1$ gives
\begin{align*}
I =  \langle r(x_2),\, &x_1 - g_1(x_2)\rangle, \quad r(x_2) =  x_2^3  + 2x_2 - 1,\ g_1(x_2) =  x_2^2 + 1.
\end{align*}
The structure of this basis makes it easy to find the solutions and is encapsulated by saying that $I$ \emph{has a Shape Lemma}.  Notice also that $I\cap\C[x_2] = \langle r(x_2)\rangle$.

A more classical approach to finding nice generators of $I$ uses resultants and subresultant polynomials.  Regarding $f_1,f_2$ as polynomials in $x_1$ with coefficients in $A = \C[x_2]$, the zeroth subresultant polynomial is the resultant, and the first subresultant polynomial is linear in $x_1$.  These polynomials will be denoted $\Rf(x_2)$ and $p_1(x_1,x_2)$ respectively in what follows.  By standard determinantal formulas,
\begin{align*}
\Rf(x_2) &=  - x_2^3 -2 x_2 + 1\\ 
p_1(x_1,x_2) &=  x_2\cdot x_1 + x_2 - 1.
\end{align*}
(One could also use the \texttt{SubresultantPolynomials} command in \emph{Mathematica} \cite{math18}.) These polynomials always lie in $I$, and in this case, they actually generate.   To see why, note that $r(x_2) = - \Rf(x_2)$, and since $x_2$ is relatively prime to $\Rf(x_2) =  -x_2^3  - 2x_2 + 1$, we have a 
B\'ezout identity
\[
A \cdot \Rf +B\cdot x_2= 1, \quad  A = 1,\  B = x_2^2+2.
\]
One computes without difficulty that
\[
x_1 + B\cdot(x_2-1) = Ax_1 \cdot\Rf + B\cdot p_1 \in \langle \Rf,p_1\rangle,
\]
and also that
\[
x_1 - g_1(x_2) = x_1 - x_2^2-1 = x_1 + B\cdot(x_2-1) +\Rf \in  \langle \Rf,p_1\rangle.
\]
Since we know $\langle \Rf,p_1\rangle \subseteq I = \langle r(x_2), x_1-g(x_2)\rangle$, equality follows.  
\end{example}

This example has two features that lead to interesting questions:
\begin{itemize}
\item $I$ has a Shape Lemma representation $I =  \langle r(x_2), x_1-g(x_2)\rangle$ and a subresultant representation $I =  \langle \Rf(x_2), p_1(x_1,x_2)\rangle$.  How often does this happen?
\item $r(x_2)$ generates the elimination ideal $I \cap \C[x_2]$, which implies that $\Rf(x_2)$ also generates $I \cap \C[x_2]$.  How often does this happen?
\end{itemize}
Our goal is to study these questions when $I = \langle f_1,\dots,f_n\rangle \subseteq \K[x_1,\dots,x_n]$ is a zero-dimensional complete intersection and $\K$ is algebraically closed.  We will always assume that $n \ge 2$.  

The paper is structured as follows.  Sections \ref{shapesec} and \ref{appendix} provide background material on the Shape Lemma and resultants that will be used in Sections \ref{elimsec} and \ref{mainsec}, where the main theorems of the paper are proved.  In the remainder of this introduction, we will describe the contents of Sections \ref{shapesec}--\ref{mainsec} in more detail and discuss how our results relate to previous work.

Section \ref{shapesec} studies zero-dimensional ideals $I \subseteq \K[x_1,\dots,x_n]$ of the form
\[
I = \langle r(x_n), x_1 - g_1(x_n),\dots,x_{n-1} - g_{n-1}(x_n)\rangle
\]
for polynomials $r(x_n), g_1(x_n),\dots, g_{n-1}(x_n) \in \K[x_n]$.  We say that $I$ \emph{has a Shape Lemma with respect to $x_n$} when this happens.  Since our results and examples will always be with respect to $x_n$, we will simply say ``$I$ has a Shape Lemma'' for the rest of the paper.  Section \ref{shapesec} will characterize when $I$ has a Shape Lemma, following \cite{BMMT96} for Lemmas \ref{shapesings} and \ref{singsshape1}, and adding Lemma \ref{shapefiber} as suggested by one of the reviewers.

Resultants take center stage in Section \ref{appendix}.  We will use the classical multivariable resultant 
\[
\Res_{d_1,\ldots, d_n}(g_1,\ldots, g_n)\in A,
\]
where $g_1,\ldots, g_n$ are homogeneous polynomials in $A[x_0,\ldots, x_{n-1}]$ of respective degrees $d_1,\dots,d_n$, and $A$ is an integral domain.  We  assume $d_i \ge 1$ for all $i$.  

Given an ideal $I = \langle f_1,\ldots, f_n \rangle \subseteq \K[x_1,\ldots, x_n]$ as above, we want to think of the last variable $x_n$ as a constant, similar to what we did in Example \ref{introex}. 
So regard $f_1,\dots,f_n$ as lying in $A[x_1,\ldots, x_{n-1}]$ with $A=\K[x_n]$, and let $f_1^h,\ldots, f_n^h\in A[x_0,x_1,\ldots, x_{n-1}]$ be their homogenizations with a new variable $x_0$ up to degrees $d_1,\ldots, d_n$ respectively, where $d_i=\deg_{x_1,\ldots, x_{n-1}}(f_i)$ for $i = 1,\dots, n$. Define
\[
\Rf(x_n):=\Res_{d_1,\ldots, d_n}(f_1^h,\ldots, f_n^h)\in\K[x_n].
\]
This resultant will appear in the theorems proved in Sections  \ref{appendix}, \ref{elimsec} and \ref{mainsec}. 
Since  the variable $x_n$ is  ``hidden'' in the coefficients, the resultant $\Rf(x_n)$ is an instance of the so-called ``hidden variable method'' (see for example \cite[Chapter 3, \S 5]{UAG}).  Extra care must be taken because in some cases (like in \cite{UAG}), the degrees $d_1,\ldots, d_n$ used to compute a hidden variable resultant are the total degrees of $f_1,\ldots, f_n$. Note that in our case, we use a smaller degree sequence that takes into account only the first $n-1$ variables.

The main purpose of Section \ref{appendix} is to give a Poisson-style formula for $\Rf(x_n).$ Since $f_1^h,\ldots, f_n^h\in \K[x_0,\ldots, x_{n-1},x_n]$ are homogeneous with respect to $x_0,\dots,x_{n-1}$, they define a variety
\[
\V(f_1^h,\ldots, f_n^h) \subseteq  \PP^{n-1}_\K \times_\K \Aff_\K^1.
\]
Here is the main result of Section \ref{appendix}, which is of independent interest:

\begin{theorem}
\label{affResm}
If $\V(f_1^h,\dots,f_n^h) \subseteq \PP^{n-1}_\K \times_\K \Aff_\K^1$ is finite, then there is a nonzero constant $c \in \K$ such that
\begin{equation}
\label{meqa}
\Rf(x_n) = c \!\!\prod_{\xi \in \V(f_1^h,\dots,f_n^h)}\!\! (x_n - \xi_n)^{m_\xi},
\end{equation}
where $\xi = ([\xi_0\co\ldots\co\xi_{n-1}],\xi_n) \in  \PP^{n-1}_\K \times_\K \Aff_\K^1$ and $m_\xi$ is the Hilbert-Samuel multiplicity of $\xi$.
\end{theorem}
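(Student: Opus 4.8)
The plan is to reduce the assertion to a local statement over a discrete valuation ring, and there to identify the valuation of a resultant with a length.

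\emph{Reduction to a local identity.} Since $\V(f_1^h,\dots,f_n^h)$ is finite it cannot dominate $\Aff^1_\K$, so the $f_i^h$ have no common zero in $\PP^{n-1}$ over $\overline{\K(x_n)}$ and hence $\Rf\ne 0$. Combining the vanishing criterion for the multivariate resultant with the compatibility of $\Res_{d_1,\dots,d_n}$ with the specialization $x_n\mapsto a$, the roots of $\Rf$ are exactly the values $\xi_n$ that occur for $\xi\in\V(f_1^h,\dots,f_n^h)$. Writing $\Rf=c\prod_a(x_n-a)^{\mathrm{ord}_a(\Rf)}$ with $c\in\K^\ast$ and grouping the points of the variety according to $\xi_n$, the theorem is equivalent to the identity $\mathrm{ord}_{x_n=a}(\Rf)=\sum_{\xi_n=a}m_\xi$ for every $a\in\K$. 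Viewed over $A=\K[x_n]$, the scheme $Z=\V(f_1^h,\dots,f_n^h)\subseteq\PP^{n-1}_A$ is proper over $A$ with finite (indeed generically empty) fibres, hence finite over $A$, so $\calO_Z$ is a torsion $A$-module. Base changing along $A\to R:=\K[[x_n-a]]$, and using that the resultant is compatible with base change, turns $\mathrm{ord}_{x_n=a}(\Rf)$ into $\val_R\bigl(\Res_{d_1,\dots,d_n}(f_1^h,\dots,f_n^h)\bigr)$, while $\calO_{Z_R}$ has finite length over $R$ with $\mathrm{length}_R\calO_{Z_R}=\dim_\K\calO_{Z_R}=\sum_{\xi_n=a}m_\xi$, the last equality because $\K$ is algebraically closed (all residue fields are $\K$, and for a $0$-dimensional local ring the Hilbert--Samuel multiplicity is the length). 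So it suffices to prove, for homogeneous $g_1,\dots,g_n\in R[x_0,\dots,x_{n-1}]$ of degrees $d_i\ge1$ with $\V(g_1,\dots,g_n)\subseteq\PP^{n-1}_R$ finite over $R$, that $\val_R\bigl(\Res_{d_1,\dots,d_n}(g_1,\dots,g_n)\bigr)=\mathrm{length}_R\,\calO_{\V(g_1,\dots,g_n)}$.

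\emph{The DVR identity.} Here I would invoke the realization of the resultant as the determinant of the Cayley--Koszul (Macaulay) complex $\calC_\bullet(\nu)$ attached to $g_1,\dots,g_n$ in a sufficiently large degree $\nu$. This is a bounded complex of finite free $R$-modules whose determinant equals $\Res_{d_1,\dots,d_n}(g_1,\dots,g_n)$ up to a unit of $R$; it becomes exact after inverting the uniformizer, since the generic fibre of $\V(g_1,\dots,g_n)$ is empty; and for $\nu\gg0$ its only nonvanishing homology is $H_0=\bigl(R[x_0,\dots,x_{n-1}]/(g_1,\dots,g_n)\bigr)_\nu\cong H^0(\calO_{\V(g_1,\dots,g_n)})$. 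The standard formula $\val_R(\det\calC_\bullet)=\sum_i(-1)^i\mathrm{length}_R H_i(\calC_\bullet)$ for such complexes then gives $\val_R(\Res)=\mathrm{length}_R\calO_{\V(g_1,\dots,g_n)}$, as wanted; equivalently one may argue that $\Res$ generates the $0$-th Fitting ideal of $\bigl(R[x]/(g_1,\dots,g_n)\bigr)_\nu$ for $\nu\gg0$ and use $\val_R(\mathrm{Fitt}_0 M)=\mathrm{length}_R M$ for a finite-length module over a DVR. A convenient preliminary step is to apply a generic linear change of $x_0,\dots,x_{n-1}$ — which alters $\Rf$ by only a nonzero constant — to move every point of $\V(f_1^h,\dots,f_n^h)$ off $\{x_0=0\}$, reducing to the affine situation where the classical hidden-variable (Poisson) formula of \cite{UAG} is available and only the affine length formula is required.

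\emph{Main obstacle.} The crux is the DVR identity: making precise, over a ring rather than a field, that once the vanishing locus is finite the resultant computes the determinant — equivalently, the $0$-th Fitting ideal — of the Macaulay complex in high degree with no extraneous factor, and that this complex has the asserted homology for $\nu\gg0$. These facts are classical but delicate; they rely on the resultant theory assembled in Section \ref{appendix}, and demand care with the Macaulay degree threshold and with the passage from algebraically closed fields to general integral domains.
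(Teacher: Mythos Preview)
Your proposal is correct, and its overall architecture matches the paper's: reduce to a local identity over a DVR (the paper localizes at $\fp=\langle x_n-a\rangle\subseteq A$, you complete to $\K[[x_n-a]]$, which is immaterial), then identify $\mathrm{ord}_a(\Rf)$ with $\sum_{\xi_n=a}\mathrm{length}\,\calO_{V,\xi}$.

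The genuine difference is in how the DVR identity is established. The paper first proves separately (Proposition~\ref{finiteRegSeq}) that finiteness of $\V(f_1^h,\dots,f_n^h)$ forces $f_1^h,\dots,f_n^h$ to be a regular sequence in $A[x_0,\dots,x_{n-1}]$, using the ${}^*$local machinery of \cite{BH}; with that in hand it invokes \cite[Theorem~2.1]{SS}, which gives $\mathrm{length}(B_\fp)=\mathrm{length}(A_\fp/\mathfrak{R}_\fp)$ directly for the resultant ideal $\mathfrak{R}=\langle\Rf\rangle$. Your route---realizing $\Res$ as the determinant of the Cayley--Koszul complex in degree $\nu\gg0$ and applying $\val_R(\det\calC_\bullet)=\sum_i(-1)^i\,\mathrm{length}_R H_i(\calC_\bullet)$---is essentially what the Scheja--Storch result packages, unpacked. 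Your claim that the complex has homology only in degree $0$ for $\nu\gg0$ is precisely where the regular-sequence property enters, so your ``delicate'' step is in effect a reproof of Proposition~\ref{finiteRegSeq}. The paper's version is shorter by outsourcing to \cite{SS}; yours is more self-contained and makes the mechanism (divisor of a determinant equals Euler characteristic of homology lengths) visible. Your side remark about a generic linear change of $x_0,\dots,x_{n-1}$ to clear $\{x_0=0\}$ and fall back on the affine Poisson formula is a legitimate third path, though one must still argue that the affine intersection multiplicities agree with the $m_\xi$ after the coordinate change.
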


Properties of resultants guarantee that $\Rf(x_n) \in \langle f_1,\dots,f_n\rangle \cap \K[x_n] = I \cap \K[x_n]$.  Section \ref{elimsec} will study when $\Rf(x_n)$ generates $I\cap \K[x_n]$.  The ideal $I \subseteq \K[x_1,\dots,x_n]$ gives $\V(I) = \V(f_1,\dots,f_n) \subseteq \Aff_\K^n$, where the affine space $\Aff_\K^n$ has coordinates $x_1,\dots,x_n$.  This lies  in $\PP^{n-1}_\K \times_\K \Aff_\K^1$, with complement defined by $x_0 = 0$, the ``points at $\infty$'' in $\PP^{n-1}_\K \times_\K \Aff_\K^1$.  From this point of view, elements of $\V(f_1^h,\dots,f_n^h)$ with $x_0 = 0$ will be regarded as ``solutions at $\infty$'' of $f_1 = \cdots = f_n = 0$.  

The main result of Section \ref{elimsec} describes how solutions at $\infty$ and the Shape Lemma interact with $\Rf(x_n)$ and the elimination ideal $I \cap \K[x_n]$:

\begin{theorem}
\label{elimideal}
Let $I = \langle f_1,\dots,f_n\rangle$ be a zero-dimensional ideal such that the map $\V(I) \to \Aff_\K^1$ given by projection onto the $n$th coordinate is injective as a map of sets.  Then any two of the following three conditions imply the third{\rm:}
\begin{enumerate}
\item $I$ has a Shape Lemma.
\item $f_1,\dots,f_n$ have no solutions at $\infty$.
\item $I \cap \K[x_n] = \langle \Rf(x_n)\rangle$.
\end{enumerate}
\end{theorem}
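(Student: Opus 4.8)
The plan is to exploit the Poisson-style formula of Theorem~\ref{affResm} together with the degree count for the elimination ideal. Since the projection $\V(I)\to\Aff^1_\K$ onto the $x_n$-coordinate is injective, the points of $\V(I)$ are in bijection with their $x_n$-values, so the radical of $I\cap\K[x_n]$ is $\prod_{\xi\in\V(I)}(x_n-\xi_n)$. Write $\V(f_1^h,\dots,f_n^h)=\V(I)\sqcup W$, where $W$ collects the solutions at $\infty$ (those with $x_0=0$); note $\V(I)$ itself is finite by zero-dimensionality, and condition (2) is exactly $W=\emptyset$. Theorem~\ref{affResm} then gives, up to a nonzero constant,
\[
\Rf(x_n)=\prod_{\xi\in\V(I)}(x_n-\xi_n)^{m_\xi}\cdot\prod_{\eta\in W}(x_n-\eta_n)^{m_\eta}.
\]
The strategy is to compare this factorization with a generator of $I\cap\K[x_n]$ in each of the three pairwise implications.

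First I would record the two background facts I intend to use repeatedly. (a) For a zero-dimensional $I\subseteq\K[x_1,\dots,x_n]$ with $\K$ algebraically closed, $I$ has a Shape Lemma iff $I=\sqrt{I}$ is radical \emph{and} the $x_n$-coordinates of the points of $\V(I)$ are distinct; this is the characterization from Section~\ref{shapesec} (Lemmas~\ref{shapesings}, \ref{singsshape1}). Under our injectivity hypothesis the distinctness is automatic, so here ``$I$ has a Shape Lemma'' $\iff$ ``$I$ is radical.'' (b) When $I$ is radical, $I\cap\K[x_n]=\langle\prod_{\xi\in\V(I)}(x_n-\xi_n)\rangle$, a squarefree polynomial of degree $\#\V(I)$; in general $I\cap\K[x_n]$ is generated by some $r(x_n)$ whose radical is this squarefree polynomial.

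Now the three implications. For $(1)+(2)\Rightarrow(3)$: Shape Lemma gives $I$ radical, so $I\cap\K[x_n]=\langle\prod_{\xi}(x_n-\xi_n)\rangle$; condition (2) gives $W=\emptyset$, and radicality forces every $m_\xi=1$ in Theorem~\ref{affResm} (the local ring at each $\xi$ is a field, so Hilbert--Samuel multiplicity $1$), whence $\Rf(x_n)$ equals that same squarefree product up to a constant — this is (3). For $(2)+(3)\Rightarrow(1)$: with $W=\emptyset$, $\Rf(x_n)=c\prod_\xi(x_n-\xi_n)^{m_\xi}$, and (3) says this generates $I\cap\K[x_n]$; but $I\cap\K[x_n]$ always has radical $\prod_\xi(x_n-\xi_n)$, so $\prod_\xi(x_n-\xi_n)^{m_\xi}$ being a generator forces it to be squarefree, i.e. all $m_\xi=1$. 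Each $m_\xi=1$ means the local ring $\calO_{\V(f^h),\xi}$ is a field; since these local rings are (via the standard dehomogenization/hidden-variable dictionary) the local rings of $I$ at the corresponding affine point, $I$ is radical, hence has a Shape Lemma by (a). For $(1)+(3)\Rightarrow(2)$: Shape Lemma gives $I$ radical and $I\cap\K[x_n]=\langle\prod_{\xi\in\V(I)}(x_n-\xi_n)\rangle$, of degree $\#\V(I)$; by (3) this equals $\langle\Rf(x_n)\rangle$, so $\deg\Rf=\#\V(I)$. But Theorem~\ref{affResm} gives $\deg\Rf=\sum_{\xi\in\V(I)}m_\xi+\sum_{\eta\in W}m_\eta\ge\#\V(I)$, with equality iff all multiplicities are $1$ and $W=\emptyset$; in particular $W=\emptyset$, which is (2).

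The main obstacle I anticipate is the dictionary in fact (b) and in the $m_\xi$ computations: one must be careful that the Hilbert--Samuel multiplicity $m_\xi$ appearing in Theorem~\ref{affResm} for an affine point $\xi=([\,1:\xi_1:\dots:\xi_{n-1}\,],\xi_n)$ really does coincide with the local multiplicity $\dim_\K\calO_{\Aff^n,\xi}/I_\xi$ of $I$ at the corresponding point of $\V(I)\subseteq\Aff^n_\K$, so that ``$I$ radical'' translates exactly to ``all $m_\xi=1$ and $W=\emptyset$ is irrelevant to radicality.'' This requires checking that homogenizing in $x_0,\dots,x_{n-1}$ (with $x_n$ hidden) does not introduce embedded or extra structure over the affine chart $x_0\ne 0$ — which it does not, since dehomogenization is flat over that chart — and that the degrees $d_i=\deg_{x_1,\dots,x_{n-1}}(f_i)$ are the right ones for the resultant to be nonzero under the finiteness hypothesis of Theorem~\ref{affResm}. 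Once this bookkeeping is pinned down, each of the three implications is the short degree/squarefreeness argument sketched above.
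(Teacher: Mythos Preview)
There are two genuine gaps.

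First, your ``fact (a)'' is false: having a Shape Lemma does \emph{not} mean $I$ is radical. The ideal $I=\langle x_2^2,\, x_1-g(x_2)\rangle$ has a Shape Lemma but is not radical. Lemmas~\ref{shapesings} and~\ref{singsshape1} characterize the Shape Lemma (under injectivity) by the condition that for each $\xi\in\V(I)$ the Hilbert--Samuel multiplicity $m_\xi$ equals the multiplicity of $\xi_n$ as a root of the generator $r(x_n)$ of $I\cap\K[x_n]$; they do not say $m_\xi=1$. This error breaks both of your first two implications as written. In $(1)+(2)\Rightarrow(3)$ you cannot conclude that $\Rf$ is squarefree; the correct argument matches the multiplicities: Lemma~\ref{shapesings}(3) gives $r(x_n)=\prod_\xi(x_n-\xi_n)^{m_\xi}$, and with $W=\emptyset$ Theorem~\ref{affResm} gives the same product for $\Rf(x_n)$. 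In $(2)+(3)\Rightarrow(1)$ your claim that ``$\Rf$ generates $I\cap\K[x_n]$ forces it to be squarefree'' is a non sequitur (the generator of $I\cap\K[x_n]$ need not be squarefree); the correct step is that $\Rf=c\prod_\xi(x_n-\xi_n)^{m_\xi}$ generating $I\cap\K[x_n]$ means the root $\xi_n$ has multiplicity exactly $m_\xi$ in $r(x_n)$, and then Lemma~\ref{singsshape1} yields the Shape Lemma. So these two implications are salvageable, but not via radicality.

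Second, and more seriously, your argument for $(1)+(3)\Rightarrow(2)$ has a structural gap: you invoke Theorem~\ref{affResm} to write $\deg\Rf=\sum_{\xi\in\V(I)}m_\xi+\sum_{\eta\in W}m_\eta$, but Theorem~\ref{affResm} requires $\V(f_1^h,\dots,f_n^h)$ to be \emph{finite}. There can be infinitely many solutions at $\infty$ (the paper's Example~\ref{infsolinf} exhibits a projective line's worth, all lying over a root of $r(x_n)$), and in that case the Poisson formula is unavailable and your degree count never gets off the ground. The paper handles this by a deformation: it perturbs the $f_i^h$ by carefully chosen $\varepsilon\cdot g_i$ so that the deformed system has only finitely many solutions over an algebraically closed field of generalized Puiseux series, applies Theorem~\ref{affResm} there, and then takes $\varepsilon\to0$ to show $\deg\Rf(x_n)>\deg r(x_n)$ whenever a solution at $\infty$ exists. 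Some such argument is genuinely needed; the finite-$W$ case alone does not suffice.
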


In Section \ref{mainsec}, subresultants enter the picture.  The \emph{critical degree} of the system $f_1 = \dots = f_n = 0$ is $\rho = d_1+\cdots +d_n - n$, where $d_i=\deg_{x_1,\ldots, x_{n-1}}(f_i)$ as above.  For every monomial $x^\alpha$ in $x_0,\dots,x_{n-1}$ of degree $\rho$, there is a \emph{scalar subresultant} $s_\alpha(x_n) \in \K[x_n]$ (it is scalar with respect to $x_0,\dots,x_{n-1}$).  Then, if $\rho \ge 1$, define 
\[
s_i(x_n) := s_{\alpha(i)}(x_n), \text{ where } x^{\alpha(i)} =x_0^{\rho-1}x_i \text{ for } i = 0,\dots,n-1,
\]
which leads to the \emph{first subresultant polynomials}
\[ 
p_i(x_i,x_n) := s_0(x_n) x_i - s_i(x_n) \in \K[x_i,x_n]
\]
for $i = 1,\dots,n-1$.  Note that the coefficient of $x_i$ in $p_i(x_i,x_n)$ is $s_0(x_n)$, independent of $i$.  This will be important in what follows.  When $n = 2$, $p_1(x_1,x_2)$ agrees with the subresultant polynomial that appeared in Example \ref{introex}.



The first main result of Section \ref{mainsec} describes the optimal interaction between 
the ideal  $I$,  the resultant $\Rf(x_n)$, and first subresultant polynomials $p_i(x_i,x_n)$:

\begin{theorem}
\label{slm2}
Assume that $I = \langle f_1,\dots,f_n\rangle$ is zero-dimensional with $\rho\geq1$. Then the following are equivalent{\rm:}
\begin{enumerate}
\item $I$ has a Shape Lemma and no solutions at $\infty$.
\item $I\cap\K[x_n]=\langle \Rf(x_n)\rangle$ and $\gcd(\Rf(x_n), s_0(x_n)) = 1$.
\item $I = \langle \Rf(x_n), p_1(x_1,x_n),\ldots, p_{n-1}(x_{n-1},x_n)\rangle$ and 
$I\cap\K[x_n]=\langle \Rf(x_n)\rangle$.
\end{enumerate}
Furthermore, when these conditions are all true, we have
\[
I \cap \K[x_{i_1},\dots,x_{i_\ell},x_n] = \langle \Rf(x_n), p_{i_0}(x_{i_0},x_n),\dots,p_{i_\ell}(x_{i_\ell},x_n)\rangle
\]
 whenever $1 \le i_1 < \cdots < i_\ell < n$.
\end{theorem}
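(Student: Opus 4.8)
The plan is to prove the cycle of implications $(1)\Rightarrow(2)\Rightarrow(3)\Rightarrow(1)$ and then the ``Furthermore'' clause, leaning on three ingredients: Theorem~\ref{affResm} (the Poisson formula for $\Rf$), Theorem~\ref{elimideal} (the ``two out of three'' principle), and the basic properties of the scalar subresultants recorded earlier in this section --- namely that $\Rf,p_1,\dots,p_{n-1}\in I$, and the dictionary relating the nonvanishing of $s_0$ at a root $t$ of $\Rf$ to the fiber $\V(f_1^h(t),\dots,f_n^h(t))\subseteq\PP^{n-1}_\K$: concretely, that $s_0(t)\neq0$ precisely when this fiber is a single point, necessarily of the form $([1\co s_1(t)/s_0(t)\co\cdots\co s_{n-1}(t)/s_0(t)],t)$. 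From this dictionary I extract the observation used throughout: if $\gcd(\Rf,s_0)=1$, then over every root of $\Rf$ the fiber is one point in the chart $x_0\neq 0$ and over every non-root the fiber is empty (the resultant being nonzero), so $\V(f_1^h,\dots,f_n^h)$ is finite, equals $\V(I)$, contains no solutions at infinity, and projects injectively onto $\Aff^1_\K$; moreover Theorem~\ref{affResm} then reads $\Rf(x_n)=c\prod_{\xi\in\V(I)}(x_n-\xi_n)^{m_\xi}$ with multiplicities computed in $\Aff^n_\K$, so $\deg\Rf=\dim_\K\K[x_1,\dots,x_n]/I$.

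For $(1)\Rightarrow(2)$: a Shape Lemma makes $\V(I)$ the graph of a map over the finitely many roots of $r(x_n)$, so the projection to $\Aff^1_\K$ is injective; since $f_1,\dots,f_n$ have no solutions at infinity, Theorem~\ref{elimideal} gives $I\cap\K[x_n]=\langle\Rf\rangle$. Then the roots of $\Rf$ are exactly the $\xi_n$, $\xi\in\V(I)$ (Poisson plus no solutions at infinity), and over each of them the fiber of $\V(f_1^h,\dots,f_n^h)$ is the single affine point $\xi$; by the subresultant dictionary this forces $s_0(\xi_n)\neq 0$, i.e.\ $\gcd(\Rf,s_0)=1$. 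For $(2)\Rightarrow(3)$: the equality $I\cap\K[x_n]=\langle\Rf\rangle$ is assumed, so only the generation statement is at issue. Using $\gcd(\Rf,s_0)=1$, fix a B\'ezout identity $u\,s_0+v\,\Rf=1$ in $\K[x_n]$ and set $g_i:=u\,s_i\in\K[x_n]$ for $i=1,\dots,n-1$. A direct computation gives $s_0\,(x_i-g_i)=p_i+v\,s_i\,\Rf\in I$, and since $\gcd(s_0,\Rf)=1$ and $\Rf\in I$ this upgrades to $x_i-g_i\in I$; hence $J:=\langle\Rf,x_1-g_1,\dots,x_{n-1}-g_{n-1}\rangle\subseteq I$. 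The quotient $\K[x_1,\dots,x_n]/J\cong\K[x_n]/\langle\Rf\rangle$ has dimension $\deg\Rf=\dim_\K\K[x_1,\dots,x_n]/I$ by the observation above, so $J=I$. Finally, reversing the computation shows $\langle\Rf,x_i-g_i\rangle=\langle\Rf,p_i\rangle$ inside $\K[x_i,x_n]$, whence $I=\langle\Rf,p_1,\dots,p_{n-1}\rangle$.

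For $(3)\Rightarrow(1)$: first, $\gcd(\Rf,s_0)=1$. Indeed, if $s_0(t)=0$ for some root $t$ of $\Rf$, then since $I\cap\K[x_n]=\langle\Rf\rangle$ there is $\xi\in\V(I)=\V(\Rf,p_1,\dots,p_{n-1})$ with $\xi_n=t$, and $p_i(\xi_i,t)=-s_i(t)=0$ forces $s_i(t)=0$ for all $i$; but then every point of $\Aff^{n-1}_\K\times\{t\}$ satisfies $\Rf$ and all the $p_i$, so $\V(I)$ is infinite (recall $n\geq 2$), a contradiction. With $\gcd(\Rf,s_0)=1$ in hand, the observation gives that the projection $\V(I)\to\Aff^1_\K$ is injective and that $f_1,\dots,f_n$ have no solutions at infinity, so Theorem~\ref{elimideal}, applied to its conditions (2) and (3), produces a Shape Lemma; thus (1) holds. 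For the ``Furthermore'' clause, once all three conditions hold we have $I=\langle\Rf,x_1-g_1,\dots,x_{n-1}-g_{n-1}\rangle$ with $g_j\in\K[x_n]$ as above. With respect to any lexicographic order in which $x_1\succ\cdots\succ x_{n-1}\succ x_n$, this generating set has the pairwise coprime leading monomials $x_1,\dots,x_{n-1},x_n^{\deg\Rf}$, hence is a Gr\"obner basis by Buchberger's criterion; the Elimination Theorem then gives $I\cap\K[x_{i_1},\dots,x_{i_\ell},x_n]=\langle\Rf,x_{i_1}-g_{i_1},\dots,x_{i_\ell}-g_{i_\ell}\rangle$ whenever $1\le i_1<\cdots<i_\ell<n$, and rewriting each $x_{i_j}-g_{i_j}$ in terms of $p_{i_j}$ and $\Rf$ (using $u\,s_0+v\,\Rf=1$, all coefficients lying in $\K[x_n]$, so that the rewriting stays inside $\K[x_{i_1},\dots,x_{i_\ell},x_n]$) converts this into $\langle\Rf,p_{i_1},\dots,p_{i_\ell}\rangle$, as claimed.

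The step I expect to be the main obstacle is the use, in $(1)\Rightarrow(2)$, of the direction of the subresultant dictionary asserting that $s_0$ cannot vanish at a root of $\Rf$ whose fiber in $\V(f_1^h,\dots,f_n^h)$ is a single affine point; dually, the other direction --- $s_0(t)\neq0$ implies the fiber is a single point in the chart $x_0\neq 0$, with coordinates read off from the $s_i/s_0$ --- carries all the weight in $(2)\Rightarrow(3)$ and $(3)\Rightarrow(1)$. Everything else is a routine assembly of Theorems~\ref{affResm} and~\ref{elimideal} with elementary B\'ezout arguments, a dimension count for Shape Lemma quotients, and lexicographic Gr\"obner bases; but that dictionary is exactly where one must understand precisely how the number of fiber points, their multiplicities, and whether they lie at infinity are recorded by the scalar subresultants, and it is presumably the content of the structural results on the $s_\alpha$ proved earlier in this section.
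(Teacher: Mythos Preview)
Your argument is correct, but it takes a different route from the paper's. The paper packages the B\'ezout manipulations and the dimension count into a standalone lemma (Lemma~\ref{RShapeLem}) about ideals of the form $\langle d(x_n),\,d_0(x_n)x_1-d_1(x_n),\dots\rangle$, and then invokes that lemma three times: for $(2)\Rightarrow(3)$, for the Shape Lemma in $(3)\Rightarrow(1)$, and again for the ``Furthermore'' clause. You instead carry out the B\'ezout identity, the inclusion $J\subseteq I$, and the equality via $\dim_\K\K[x_1,\dots,x_n]/J=\deg\Rf=\dim_\K\K[x_1,\dots,x_n]/I$ by hand, and for $(3)\Rightarrow(1)$ you first extract $\gcd(\Rf,s_0)=1$ directly and then feed ``no solutions at $\infty$'' plus $I\cap\K[x_n]=\langle\Rf\rangle$ into Theorem~\ref{elimideal}, whereas the paper gets the Shape Lemma first and then deduces ``no solutions at $\infty$''. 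Both orderings are fine; your version is more self-contained, the paper's is more reusable.

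One precision to add: your ``subresultant dictionary'' as stated is slightly too strong. Proposition~\ref{slm} says $s_0(t)\neq0$ iff the fiber over $t$ is a single \emph{reduced} point with $\xi_0\neq0$, not merely a single point. In your $(1)\Rightarrow(2)$ you need the direction ``single affine point $\Rightarrow s_0(t)\neq0$'', which fails without reducedness. Under the Shape Lemma hypothesis this is supplied by Lemma~\ref{shapefiber}: every nonempty fiber of $\V(I)\to\Aff^1_\K$ is a single reduced point, and since there are no solutions at $\infty$ the same holds for $\V(f_1^h,\dots,f_n^h)\to\Aff^1_\K$. With that one word inserted, the step you flagged as the main obstacle is exactly Proposition~\ref{slm} plus Lemma~\ref{shapefiber}, and the rest goes through as you wrote it.
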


Notice that (3) is the nicest case:\ the ideal is generated by the resultant and first subresultant polynomials, and the elimination ideal is generated by the resultant.   The miracle is that when this holds, explicit generators can be given for \emph{all} elimination ideals that do not eliminate $x_n$.  Also, as we did in Example \ref{introex}, in this situation one can recover the lexicographic Gr\"obner basis of all of these ideals by computing $ x_{i_j}-s_{i_j}(x_n)\cdot (s_0(x_n)^{-1}\! \!\!\mod \Rf(x_n)),\, j=1,\ldots, \ell.$

The second main theorem of Section \ref{mainsec} assumes only that $I$ is generated by the resultant $\Rf(x_n)$ and first subresultant polynomials $p_i(x_i,x_n) = s_0(x_n)x_i - s_i(x_n)$:

\begin{theorem}
\label{RShape}
Let  $I = \langle f_1,\dots,f_n\rangle$ be zero-dimensional with $\rho \ge 1$ and assume that $I = \langle \Rf(x_n),p_1(x_1,x_n),\dots,p_{n-1}(x_{n-1},x_n)\rangle$.  Then{\rm:}
\begin{enumerate}
\item $\gcd(\Rf(x_n), s_0(x_n),\dots,s_{n-1}(x_n)) = 1$. 
\item $I$ has a Shape Lemma.
\end{enumerate} 
Furthermore, the following conditions are equivalent{\rm:}
\begin{enumerate}
\item[{\rm(3)}] $I\cap\K[x_n]= \langle \Rf(x_n)\rangle$.
\item[{\rm(4)}]  $\gcd(\Rf(x_n), s_0(x_n)) = 1$.
\item[{\rm(5)}]  $f_1,\dots,f_n$ have no solutions at $\infty$.
\end{enumerate}
\end{theorem}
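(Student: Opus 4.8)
The plan is to derive everything from one observation: under the standing hypothesis $I = \langle \Rf(x_n), p_1(x_1,x_n),\dots,p_{n-1}(x_{n-1},x_n)\rangle$, the image of $s_0(x_n)$ is a unit in the (nonzero, Artinian) quotient $Q := \K[x_1,\dots,x_n]/I$; write $\bar{h}$ for the image of $h$ in $Q$. To begin, and to prove (1), I would first note that $\Rf(x_n)\neq 0$: otherwise $I$ would be generated by the $n-1$ polynomials $p_1,\dots,p_{n-1}$, forcing every irreducible component of $\V(I)$ to have dimension at least $n-(n-1)=1$ by Krull's height theorem, contradicting that $I$ is zero-dimensional. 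Then $d(x_n):=\gcd(\Rf(x_n),s_0(x_n),\dots,s_{n-1}(x_n))$ divides the nonzero $\Rf(x_n)$, so it is nonzero; since it divides $s_0$ and each $s_i$, it divides each $p_i=s_0x_i-s_i$, so every generator of $I$ lies in $\langle d(x_n)\rangle$ and $\V(d(x_n))\subseteq\V(I)$. As $\K$ is algebraically closed, $\deg d\geq 1$ would make $\V(d(x_n))$ a nonempty union of hyperplanes, of dimension $n-1\geq 1$, contradicting $\dim\V(I)=0$; hence $\deg d=0$, which is (1). For the key observation, suppose $s_0(\bar x_n)$ were not a unit in $Q$. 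Then it lies in some maximal ideal of $Q$, i.e. $s_0(\xi_n)=0$ for some $\xi=(\xi_1,\dots,\xi_n)\in\V(I)$. Since $p_i\in I$ vanishes at $\xi$, we get $s_i(\xi_n)=s_0(\xi_n)\xi_i=0$ for $i=1,\dots,n-1$, while $\Rf(\xi_n)=0$ because $\Rf\in I\cap\K[x_n]$. Thus $x_n-\xi_n$ divides $\gcd(\Rf,s_0,\dots,s_{n-1})=1$, a contradiction.

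Part (2) then follows at once: in $Q$ we may solve the relations $p_i=0$, obtaining $\bar x_i=s_0(\bar x_n)^{-1}s_i(\bar x_n)\in\K[\bar x_n]$ for $i=1,\dots,n-1$, so $Q=\K[\bar x_n]$. Letting $r(x_n)$ be the minimal polynomial of $\bar x_n$ (the monic generator of $I\cap\K[x_n]$) and choosing $g_i(x_n)\in\K[x_n]$ with $g_i(\bar x_n)=\bar x_i$, the ideal $\langle r, x_1-g_1,\dots,x_{n-1}-g_{n-1}\rangle$ is contained in $I$ and, after eliminating $x_1,\dots,x_{n-1}$, has quotient of $\K$-dimension $\deg r=\dim_\K\K[\bar x_n]=\dim_\K Q$; since both dimensions are finite and equal, the ideal equals $I$, so $I$ has a Shape Lemma. (This is also the characterization recorded in Section \ref{shapesec}.)

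For the ``furthermore'', I would prove $(3)\Leftrightarrow(4)$ directly and then read off $(3)\Leftrightarrow(5)$ from Theorem \ref{slm2}. For $(4)\Rightarrow(3)$: if $\gcd(\Rf(x_n),s_0(x_n))=1$, then $s_0$ becomes a unit in $B:=\K[x_n]/\langle\Rf(x_n)\rangle$; since $\Rf\in I$ we have $Q=B[x_1,\dots,x_{n-1}]/\langle s_0x_i-s_i: i=1,\dots,n-1\rangle$, and invertibility of $s_0$ collapses this to $B$ itself, compatibly with the maps from $\K[x_n]$, so $I\cap\K[x_n]=\ker(\K[x_n]\to B)=\langle\Rf(x_n)\rangle$. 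For $(3)\Rightarrow(4)$: if $I\cap\K[x_n]=\langle\Rf(x_n)\rangle$, then $\Rf(x_n)$ is, up to a scalar, the minimal polynomial of $\bar x_n$, whose set of roots is $\{\xi_n:\xi\in\V(I)\}$, and the key observation says $s_0$ vanishes at none of these, so $\gcd(\Rf,s_0)=1$. Finally, Theorem \ref{slm2} applies ($I$ is zero-dimensional with $\rho\geq 1$), and under the standing hypothesis $I=\langle\Rf,p_1,\dots,p_{n-1}\rangle$ its condition (3) is exactly our (3); that theorem equates it with ``$I$ has a Shape Lemma and $f_1,\dots,f_n$ have no solutions at $\infty$'', and since Part (2) already supplies the Shape Lemma, this is precisely (5). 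Hence (3), (4), (5) are equivalent. I expect the only real content to be the key observation: once one recognizes that hypothesis (1) is exactly what prevents $s_0(x_n)$ from vanishing anywhere on $\V(I)$, the Shape Lemma and the identification $Q\cong B$ in the presence of $\gcd(\Rf,s_0)=1$ are routine. The one place where I would invoke earlier work rather than argue from scratch is the equivalence with ``no solutions at $\infty$'': a self-contained proof of $(4)\Leftrightarrow(5)$ would have to relate the vanishing of the leading scalar subresultant $s_0(x_n)$ to the homogenized system's behavior at infinity (via Theorem \ref{affResm} or a Macaulay-type computation), and that comparison is already packaged in Theorem \ref{slm2}.
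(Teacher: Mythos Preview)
Your proof is correct and follows the same overall architecture as the paper: deduce (1) from zero-dimensionality, derive (2) from (1), handle (3)$\Leftrightarrow$(4) directly, and invoke Theorem~\ref{slm2} for the link to (5). The paper packages (1) and (2) into Lemma~\ref{RShapeLem} and Corollary~\ref{Lem51cor}; for (2), Lemma~\ref{RShapeLem} runs an algorithmic reduction (successively replacing $d,d_0$ by $e,e_0$ with $e_0=\gcd(d,d_0)$ until the gcd is $1$) to produce a Shape Lemma basis of $J$. Your route is more direct: the key observation that $s_0$ vanishes at no $\xi_n$ with $\xi\in\V(I)$ shows $s_0$ is a unit already in $\K[\bar x_n]$, so $\bar x_i=s_0(\bar x_n)^{-1}s_i(\bar x_n)$ in one step. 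One small point worth making explicit is that the inverse lies in the subring $\K[\bar x_n]$ and not just in $Q$; this follows either because your argument actually proves $\gcd(r,s_0)=1$ for $r$ the generator of $I\cap\K[x_n]$, or because $\K[\bar x_n]$ is finite-dimensional and multiplication by a unit of $Q$ is injective (hence bijective) on it. Your direct proof of (3)$\Rightarrow$(4) is likewise a mild simplification over the paper, which routes it through (3)$\Rightarrow$(2) of Theorem~\ref{slm2}. The trade-off is that the paper's Lemma~\ref{RShapeLem} is stated more generally (for any $J\subseteq I$ of that shape) and yields an explicit algorithm, while your argument is shorter but uses $J=I$.
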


The case $n = 2$ has one special feature.  If $f_1,f_2 \in \K[x_1,x_2]$, then their leading coefficients with respect to $x_1$ are polynomials in $x_2$.  In Section \ref{elimsec}, we will show that $f_1,f_2$ have no solutions at $\infty$ if and only if these leading coefficients are relatively prime in $\K[x_2]$.  Thus solutions at $\infty$ are easy to detect when $n=2$.

Let us revisit  Example \ref{introex} in light of what we now know:

\begin{example}
\label{introex2}
For $f_1 = x_1^2 - x_2-1,\ f_2 = x_1^2 + x_1x_2-2 \in \C[x_1,x_2]$, we began Example \ref{introex} by computing that $I = \langle f_1,f_2\rangle = \langle  x_1^2 - x_2-1,  x_1^2 + x_1x_2-2\rangle$.
Thus $I$ has a Shape Lemma.  Also, the leading coefficients of $f_1,f_2$ with respect to $x_1$ are both equal to $1$, which implies that there are no solutions at $\infty$.  By Theorem~\ref{slm2}, we immediately conclude that $I = \langle \Rf(x_2), p_1(x_1,x_2)\rangle$ and $I \cap \C[x_2] = \langle \Rf(x_2)\rangle$.
 \end{example}
 
In closing, we mention that  the results of this paper can be modified to apply when $\K$ is an arbitrary field.  For example, in Theorems \ref{elimideal}, \ref{slm2} and \ref{RShape}, injectivity means that projection induces an injection on points over $\bar\K$, and Theorem \ref{affResm} needs to be formulated in terms of the irreducible polynomials that define the image of $\V(f_1^h,\dots,f_n^h) \to \Aff_\K^1$.  We prefer to assume that $\K$ is algebraically closed since this makes the relation between the algebra and the geometry easier to see. 

\subsection*{Previous Work} The representation of algebraic varieties by polynomials having a ``Shape Lemma''  has a long history in Computational Algebra.  In 1826, Abel solved $\chi(x,y) = \theta(x,y) = 0$ by eliminating $y$ via a Poisson formula and then expressing $y$ as a rational function of $x$ (see \cite[p.\ 148]{Abel81}).  A more general version of this idea is due to Kronecker (see the introduction of \cite{GLS01} for a reference and more history).  In \cite{Can}, $u$-resultants are used to compute this representation, while \cite{GLS01} introduces geometric resolutions of varieties to simplify the computations.   In this representation, the focus is on equations for the variety $\V(I)$ and not on the ideal $I$ itself, so these results are limited to radical ideals with points in some kind of general position (see for instance \cite[Theorem 7.4]{BU}).  In \cite{Rou}, a ``Rational Univariate Representation,'' which also takes into account the multiplicities of the points, is introduced and studied from a computational point of view. A generalization to a sparse RUR can be found in  \cite{MST}.

Our current conception of the Shape Lemma began with \cite{GM89} for a radical zero-dimensional ideal, though the name ``Shape Lemma'' came later.  The history of the Shape Lemma is discussed in \cite{BMMT96}, which also characterizes ideals having a Shape Lemma in terms of  the geometry of the points in $\V(I)$. We will review and use some of their results in Section \ref{shapesec}. Also, comparing the monic generator of $I\cap\K[x_n]$ with $\Rf(x_n)$ is a classical exercise in basic Computational Algebra, see for instance Exercise 3 of Chapter 3, \S6 in the first three editions of \cite{IVA}.  It is clear that both multiplicities and roots at infinity play a decisive role here. In \cite{MRZ}, directional multiplicities are used to explain the differences between the degrees of these two polynomials. In \cite{GRZ}, the connection between the elimination ideal and univariate resultants of two generators is explored.

The use of subresultants for the Shape Lemma has been done already by Habicht in \cite{Hab}. In \cite{GV}, this method is explained  and used to produce another Gr\"obner-free/resultant-friendly computation of the Shape Lemma for a radical ideal with points in general position.  In a different context (overdetermined $n$ homogeneous polynomials in $n$ variables), \cite{Sza} uses multivariable subresultants to describe the roots of a polynomial system.

As already noted, Poisson-type formul{\ae} for resultants are important tools for solving polynomial systems. For the classical homogeneous case, this goes back to Poisson in 1802.  See \cite[Proposition 2.7]{Jou} for the presentation of this formula for generic polynomials. Whether or not one can apply Poisson to a given polynomial system depends on where the solutions are. For the classical resultant, such a formula is valid if there are no solutions at infinity. In \cite[Theorem 1.1]{DS15},  Poisson  has been extended to sparse resultants and its validity has been shown for systems having all of their roots in $(\K^\times)^n$ (no solutions at any infinity of the associated toric variety).  

\subsection*{Acknowledgements}  Our calculations were done with the aid of  Mathematica \cite{math18}.  We are grateful to the reviewers whose suggestions led to improvements in Sections \ref{shapesec}, \ref{appendix} and \ref{mainsec}.

C. D'Andrea was supported by the Spanish MICINN research project  PID2019-104047GB-I00, the Spanish State Research Agency, through the Severo Ochoa and Mar\'ia de Maeztu Program for Centers and Units of Excellence in R\&D (CEX2020-001084-M), and the European  H2020-MSCA-ITN-2019 research project GRAPES.

\bigskip



\section{The Shape Lemma}
\label{shapesec}

As in Section \ref{introsec}, a zero-dimensional ideal $I \subseteq \K[x_1,\dots,x_n]$ has a Shape Lemma if it is of the form $I = \langle r(x_n), x_1 - g_1(x_n),\dots,x_{n-1} - g_{n-1}(x_n)\rangle$.  We write the generators in this order because to find the solutions, one first solves $r(x_n) = 0$ and then uses the roots $\xi_n$ to find the coordinates $\xi_1 = g_1(\xi_n),\dots, \xi_{n-1} = g_{n-1}(\xi_n)$ of the solution $\xi = (\xi_1,\dots,\xi_n)$. 

Here we recall some basic facts about the Shape Lemma, following \cite{BMMT96}. (Although \cite{BMMT96} assumes characteristic zero, this assumption is not used in the results we cite from their paper.). We say that a point $\xi$ of a zero-dimensional scheme over $\K$ is \emph{curvilinear} if it is either smooth or has a one-dimensional Zariski tangent space.  See  \cite{Cox05} for more on curvilinear singularities.  

We begin with a lemma that combines several results from \cite{BMMT96}:

\begin{lemma}
\label{shapesings}
Let $I = \langle r(x_n), x_1 - g_1(x_n),\dots,x_{n-1} - g_{n-1}(x_n)\rangle \subseteq \K[x_1,\dots,x_n]$ have a Shape Lemma.  Then the map
\[
\V(I) \subseteq \Aff_\K^n \longrightarrow \Aff_\K^1
\]
given by projection onto the $x_n$-axis is injective as a map of sets. Furthermore, for every point $\xi = (\xi_1,\dots,\xi_n) \in \V(I)$, we have{\rm:}
\begin{enumerate}
\item $\xi$ is curvilinear.
\item The Hilbert-Samuel multiplicity of $\xi \in \V(I)$ equals the length of the 
local ring $\calO_{\V(I),\xi}$.
\item The Hilbert-Samuel multiplicity of $\xi \in \V(I)$ 
equals the multiplicity of $\xi_n$ as a root of $r(x_n)$.
\item The projection $\V(I) \to \Aff_\K^1$ induces an isomorphism of Zariski tangent spaces 
\[
T_\xi(\V(I)) \xrightarrow{\, \sim \,} T_{\xi_n}(\V(r)).
\]  
\end{enumerate}
\end{lemma}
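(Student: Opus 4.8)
The plan is to work one point at a time after a translation, reducing everything to a local computation at a single root of $r(x_n)$. First I would establish injectivity of the projection: if $\xi = (\xi_1,\dots,\xi_n)$ and $\xi' = (\xi_1',\dots,\xi_n')$ both lie in $\V(I)$ with $\xi_n = \xi_n'$, then the relations $x_i - g_i(x_n) \in I$ force $\xi_i = g_i(\xi_n) = g_i(\xi_n') = \xi_i'$, so $\xi = \xi'$; moreover $\xi_n$ must satisfy $r(\xi_n) = 0$, and conversely every root of $r$ gives a point of $\V(I)$ via the $g_i$. This sets up a bijection between $\V(I)$ and $\V(r) \subseteq \Aff_\K^1$ on the level of sets, and identifies $\V(I)$ as the image of the graph morphism $\Aff_\K^1 \to \Aff_\K^n$, $t \mapsto (g_1(t),\dots,g_{n-1}(t),t)$, restricted to $\V(r)$.

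Next I would fix a point $\xi \in \V(I)$ and, after the change of coordinates $x_i \mapsto x_i + \xi_i$, assume $\xi$ is the origin; then $g_i$ has no constant term and $r(0) = 0$. Writing $e$ for the multiplicity of $0$ as a root of $r$, so $r(x_n) = x_n^e\, u(x_n)$ with $u(0) \ne 0$, I want to identify the local ring $\calO_{\V(I),\xi}$. Localizing $\K[x_1,\dots,x_n]/I$ at the maximal ideal $\fm = \langle x_1,\dots,x_n\rangle$, the generators $x_i - g_i(x_n)$ let one eliminate $x_1,\dots,x_{n-1}$, giving a $\K$-algebra isomorphism
\[
\calO_{\V(I),\xi} \cong \bigl(\K[x_n]/\langle r(x_n)\rangle\bigr)_{\langle x_n\rangle} \cong \K[x_n]/\langle x_n^e\rangle,
\]
the last step because $u$ is a unit locally. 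From this presentation everything follows: the local ring is $\K[x_n]/\langle x_n^e\rangle$, which has length $e$, proving (2) and (3) simultaneously once I recall that for a zero-dimensional scheme the Hilbert--Samuel multiplicity of a point equals the length of its local ring (a standard fact, e.g.\ because the associated graded ring is already the full local ring). The Zariski tangent space is $(\fm/\fm^2)^\vee$, and $\fm/\fm^2$ for $\K[x_n]/\langle x_n^e\rangle$ is one-dimensional when $e \ge 1$ (spanned by $x_n$), so $\xi$ is curvilinear, giving (1); and the isomorphism above is induced precisely by the projection to the $x_n$-axis, which on tangent spaces is the claimed isomorphism $T_\xi(\V(I)) \xrightarrow{\sim} T_{\xi_n}(\V(r))$, giving (4).

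The only genuinely delicate point is the passage from the global quotient to the local ring: one must check that the ideal $I$, after localizing at $\fm$, is really generated just by $r(x_n)$ together with the $x_i - g_i(x_n)$ in a way that permits clean elimination, i.e.\ that no other branches of $\V(r)$ interfere. This is handled by noting that $r$ factors as $x_n^e$ times a unit in the local ring $\K[x_n]_{\langle x_n\rangle}$, so the contributions of the other roots of $r$ become invertible and drop out. I would cite \cite{BMMT96} for the organization of these facts but the argument above is self-contained; the reduction to the univariate local ring $\K[x_n]/\langle x_n^e\rangle$ is what makes all four assertions transparent at once.
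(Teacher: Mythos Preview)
Your proof is correct and follows essentially the same strategy as the paper: both arguments identify the local ring $\calO_{\V(I),\xi}$ with $\K[x_n]/\langle (x_n-\xi_n)^{e}\rangle$ via the elimination afforded by the generators $x_i - g_i(x_n)$, and then read off (1)--(4) from this description. The only organizational difference is that the paper first establishes the global isomorphism $\K[x_n]/\langle r(x_n)\rangle \xrightarrow{\sim} \K[x_1,\dots,x_n]/I$ and then decomposes it as a product of local rings, whereas you translate and localize at a single point from the outset; both routes land on the same local picture.
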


\begin{proof}
For the projection $\V(I) \to \Aff_\K^1$, note that $x_i - g_i(x_n) \in I$ implies that for $i = 1,\dots,n-1$, the $i$th coordinate of a point in $\V(I)$ is determined by its $n$th coordinate. Injectivity follows immediately.  Also note that (2) is true since $I$ is a complete intersection.  It remains to prove (1), (3) and (4).

Since $I \cap \K[x_n] = \langle r(x_n)\rangle$, we have an injection
\begin{equation}
\label{xyinjection}
\K[x_n]/\langle r(x_n)\rangle \longhookrightarrow \K[x_1,\dots,x_n]/I,
\end{equation}
which is onto since $[x_i] = [g_i(x_n)]$ in $\K[x_1,\dots,x_n]/I$ for $i = 1,\dots,n-1$.  Thus \eqref{xyinjection} is an isomorphism.  Write $r(x_n) = \prod_{j=1}^N (x_n-\xi_n^{(j)})^{e_j}$ with distinct $\xi_{n}^{(j)}$ (we may assume that $r(x_n)$ is monic).  Then the points of  $\V(I)$ are given by $\xi^{(j)} = (\xi_{1}^{(j)} , \xi_{2}^{(j)} ,\dots,\xi_{n}^{(j)} )$ for $j = 1,\dots,N$, where $\xi_{i}^{(j)}  = g_i(\xi_{n}^{(j)} )$ for $i = 1,\dots,n-1$.  Using the isomorphism \eqref{xyinjection}, we obtain
\[
\K[x_1,\dots,x_n]/I \simeq \K[x_n]/\langle r(x_n)\rangle \simeq \prod_{j=1}^N \K[x_n]/\langle (x_n-\xi_{n}^{(j)} )^{e_j}\rangle.
\]
This expresses $\K[x_1,\dots,x_n]/I$ as a product of local rings, and in particular, the local ring $\calO_{\V(I),\xi^{(j)}} \simeq (\K[x_1,\dots,x_n]/I)_{\xi^{(j)} }$ is isomorphic to $\K[x_n]/\langle (x_n-\xi_{n}^{(j)} )^{e_j}\rangle$ via projection onto the $x_n$-axis.  This implies that $\xi^{(j)} $ is curvilinear and its Hilbert-Samuel multiplicity, which equals the length of $\calO_{\V(I),\xi}$ by (2), is simply the multiplicity of $\xi_{n}^{(j)}$ as a root of $r(x_n)$.  This proves assertion (3).

Finally, since $(\K[x_1\dots,x_n]/I)_{\xi^{(j)}} \simeq \K[x_n]/\langle (x_n-\xi_{n}^{(j)} )^{e_j}\rangle$ is induced by the projection, we get an induced isomorphism on Zariski tangent spaces since the Zariski tangent space of a local ring with maximal ideal $\mathfrak{m}$ is $\mathfrak{m}/\mathfrak{m}^2$.  This proves assertions (1) and (4).
\end{proof}

The converse of Lemma \ref{shapesings} is true for a zero-dimensional ideal $I$, namely if the projection onto the $x_n$-axis is injective as a map of sets and conditions (1)--(4) of the lemma are satisfied for $I \cap \K[x_n] = \langle r(x_n)\rangle$, then $I$ has a Shape Lemma.   In fact, we have a slightly stronger result as follows:

\begin{lemma}
\label{singsshape1}
Suppose that $I \subseteq \K[x_1,\dots,x_n]$ is a zero-dimensional ideal and let $I \cap \K[x_n] = \langle r(x_n) \rangle$.   If the projection $\V(I) \to \Aff^1_\K$ onto the $x_n$-axis is injective as a map of sets, then the following conditions are equivalent{\rm:}
\begin{enumerate}
\item For every $\xi = (\xi_1,\dots,\xi_n) \in \V(I)$, the Hilbert-Samuel multiplicity of $\xi$ equals the multiplicity of $\xi_n$ as a root of $r(x_n)$. 
\item For every $\xi = (\xi_1,\dots,\xi_n) \in \V(I)$, the length of $\calO_{\V(I),\xi}$ equals the multiplicity of $\xi_n$ as a root of $r(x_n)$.  
\item  For every $\xi = (\xi_1,\dots,\xi_n) \in \V(I)$, the induced map on Zariski tangent spaces $T_\xi(\V(I)) \to T_{\xi_n}(\V(r))$ is an isomorphism. 
\item $I$ has a Shape Lemma.
\end{enumerate}
\end{lemma}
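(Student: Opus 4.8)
The plan is to prove Lemma~\ref{singsshape1} by first observing that (1)$\Leftrightarrow$(2) is immediate: since $I$ is zero-dimensional, hence a product of Artinian local rings, the Hilbert-Samuel multiplicity of a point $\xi$ of the scheme $\V(I)$ coincides with the length of $\calO_{\V(I),\xi}$ (this is the standard fact that for a zero-dimensional local ring the Hilbert-Samuel multiplicity with respect to the maximal ideal equals the length). So the content of the lemma lies in the equivalences involving (3) and (4).

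For (4)$\Rightarrow$(1),(2),(3), I would simply invoke Lemma~\ref{shapesings}: if $I$ has a Shape Lemma then parts (2), (3), (4) of that lemma give exactly conditions (2), (1), and (3) here, respectively (and injectivity of the projection is automatic). The real work is the reverse direction. For (3)$\Rightarrow$(4), I would argue as follows. By injectivity of the projection $\pi\co\V(I)\to\Aff^1_\K$, the points of $\V(I)$ are $\xi^{(1)},\dots,\xi^{(N)}$ with distinct last coordinates $\xi_n^{(1)},\dots,\xi_n^{(N)}$, and these are precisely the roots of $r(x_n)$. Decompose $\K[x_1,\dots,x_n]/I \simeq \prod_{j=1}^N \calO_{\V(I),\xi^{(j)}}$. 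The inclusion $\K[x_n]/\langle r(x_n)\rangle \hookrightarrow \K[x_1,\dots,x_n]/I$ from $I\cap\K[x_n]=\langle r(x_n)\rangle$ is compatible with these decompositions, mapping $\K[x_n]/\langle(x_n-\xi_n^{(j)})^{e_j}\rangle$ into $\calO_{\V(I),\xi^{(j)}}$, where $e_j$ is the multiplicity of $\xi_n^{(j)}$ as a root of $r$. Now use condition (3): the induced map on Zariski tangent spaces $T_{\xi^{(j)}}(\V(I)) \to T_{\xi_n^{(j)}}(\V(r))$ is an isomorphism. Since $T_{\xi_n^{(j)}}(\V(r))$ is one-dimensional (the root $\xi_n^{(j)}$ of a nonzero univariate polynomial has a one-dimensional tangent space precisely when $e_j\ge 1$, which always holds; indeed $\mathfrak{m}/\mathfrak{m}^2$ for $\K[x_n]/\langle(x_n-\xi_n^{(j)})^{e_j}\rangle$ is one-dimensional for all $e_j\ge1$), each local ring $\calO_{\V(I),\xi^{(j)}}$ has a one-dimensional Zariski tangent space, i.e. is curvilinear. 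A curvilinear Artinian local $\K$-algebra with residue field $\K$ is of the form $\K[t]/\langle t^{\ell_j}\rangle$ for some $\ell_j\ge1$, and a local homomorphism $\K[x_n]/\langle(x_n-\xi_n^{(j)})^{e_j}\rangle \to \K[t]/\langle t^{\ell_j}\rangle$ which is injective and induces an isomorphism on cotangent spaces must be surjective (an injection of Artinian local rings inducing a surjection on $\mathfrak{m}/\mathfrak{m}^2$ is surjective, by Nakayama), hence an isomorphism by comparing lengths; therefore $\ell_j=e_j$ and the inclusion $\K[x_n]/\langle r\rangle \hookrightarrow \K[x_1,\dots,x_n]/I$ is an isomorphism. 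This says every $[x_i]$ equals some $[g_i(x_n)]$ in the quotient, i.e. $x_i - g_i(x_n)\in I$ for suitable $g_i\in\K[x_n]$, and together with $r(x_n)\in I$ one checks $\langle r(x_n), x_1-g_1(x_n),\dots,x_{n-1}-g_{n-1}(x_n)\rangle = I$ by a dimension count over $\K$; so $I$ has a Shape Lemma.

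For (1),(2)$\Rightarrow$(4) the argument is the same once one knows (1) or (2) forces each inclusion $\K[x_n]/\langle(x_n-\xi_n^{(j)})^{e_j}\rangle \hookrightarrow \calO_{\V(I),\xi^{(j)}}$ to be an isomorphism. Here the point is just lengths: the source has length $e_j$, and (1) (equivalently (2)) says the target also has length $e_j$; an injective map of $\K$-vector spaces of equal finite dimension is an isomorphism. So (1)$\Rightarrow$(4) is actually the cleanest implication, and one can organize the whole proof as (4)$\Rightarrow$(1)$\Leftrightarrow$(2), (1)$\Rightarrow$(4), and then (3)$\Leftrightarrow$(4) separately (or (4)$\Rightarrow$(3) from Lemma~\ref{shapesings} and (3)$\Rightarrow$(4) as above).

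The main obstacle I anticipate is the implication (3)$\Rightarrow$(4): unlike the length conditions (1)/(2), condition (3) only controls the cotangent space and not the length a priori, so one genuinely needs the curvilinear structure theorem (a zero-dimensional local $\K$-algebra with residue field $\K$ and one-dimensional cotangent space is $\K[t]/\langle t^\ell\rangle$) plus a Nakayama-type surjectivity argument to promote the tangent-space isomorphism to a ring isomorphism. Everything else is bookkeeping with the product decomposition of the Artinian ring $\K[x_1,\dots,x_n]/I$ and elementary linear algebra over $\K$.
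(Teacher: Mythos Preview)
Your proof is correct and follows essentially the same strategy as the paper: both reduce everything to showing that the injection $\K[x_n]/\langle r(x_n)\rangle \hookrightarrow \K[x_1,\dots,x_n]/I$ is an isomorphism. The paper does this via a global degree count (using $\deg r \le \sum_j k_j \le \sum_j m_j$ and then forcing equality from divisibility), whereas you argue factor by factor on the local decomposition; these are equivalent. For (3)$\Rightarrow$(4) the paper invokes \cite[Proposition~5]{BMMT96} and sketches a primary-decomposition argument, while your curvilinear-plus-Nakayama argument is more self-contained and arguably cleaner.

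One small slip: your parenthetical claim that $\mathfrak{m}/\mathfrak{m}^2$ is one-dimensional for $\K[x_n]/\langle(x_n-\xi_n^{(j)})^{e_j}\rangle$ ``for all $e_j\ge 1$'' is false when $e_j=1$ (then the cotangent space is zero). This does not damage the argument---when $e_j=1$ condition (3) forces $T_{\xi^{(j)}}(\V(I))=0$, so $\calO_{\V(I),\xi^{(j)}}\simeq\K$ and the local map is trivially an isomorphism---but you should treat the reduced and non-reduced cases separately, or simply say the cotangent space has dimension $\le 1$.
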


\begin{proof}
By Lemma \ref{shapesings}, (4) implies (1), (2) and (3).  It remains to prove that (1), (2) and (3) each imply (4).
Since $I \cap \K[x_n] = \langle r(x_n)\rangle$, we get the injection \eqref{xyinjection}.  Thus, to prove (4), it suffices to show that \eqref{xyinjection} is an isomorphism since the elements of $\K[x_1,\dots,x_n]/I$ represented by $x_1,\dots,x_{n-1}$ would be in the image of \eqref{xyinjection}.  This would give the desired $g_1(x_n),\dots,g_{n-1}(x_n)$. 

Let $\V(I) = \{\xi^{(1)} ,\dots,\xi^{(N)} \}$ with $\xi^{(j)}  = (\xi_{1}^{(j)} ,\dots,\xi_{n}^{(j)})$, and write 
\begin{equation}
\label{prodlocal}
\K[x_1,\dots,x_n]/I = \prod_{j=1}^N \calO_{\V(I),\xi^{(j)}}.
\end{equation}
Let $m_j$ be the Hilbert-Samuel multiplicity of $\xi^{(j)}$ and $k_j$ be the length $\dim_\K \calO_{\V(I),\xi^{(j)}}$ of $\calO_{\V(I),\xi^{(j)}}$.  Then $\dim_\K \K[x_1,\dots,x_n]/I = \sum_{j=1}^N k_j$, and since $k_j \le m_j$ for all $j$, the injection \eqref{xyinjection} implies 
\begin{equation}
\label{deghub}
\begin{array}{c}
\begin{aligned}
\deg(r(x_n)) &= \dim_\K \K[x_n]/\langle r(x_n)\rangle\\ &\le \dim_\K \K[x_1,\dots,x_n]/I = \sum_{j=1}^N k_j \le \sum_{j=1}^N m_j.
\end{aligned}
\end{array}
\end{equation}

Now assume that (1) is true. Since $r(x_n) \in I$ and $\xi^{(j)}  = (\xi_{1}^{(j)} ,\dots,\xi_{n}^{(j)} )$, we have $r(\xi_{n}^{(j)}) = 0$, and then assumption (1)  implies that $\xi_{n}^{(j)} $ is a root of multiplicity $m_j$ of $r(x_n)$.  Thus $(x_n-\xi_{n}^{(j)} )^{m_j}$ divides $r(x_n)$.  But the $\xi_{n}^{(j)} $ are distinct by our injectivity assumption, so that $\prod_{j=1}^N  (x_n-\xi_{n}^{(j)} )^{m_j}$ divides $r(x_n)$.  Thus
\[
\deg(r(x_n)) \ge \sum_{j=1}^N m_j,
\]
which is an equality by virtue of \eqref{deghub}.  Thus \eqref{xyinjection} is an injection where both source and target have the same dimension, proving that it is an isomorphism. 

Similarly, if (2) is true, then $\xi_{n}^{(j)} $ is a root of multiplicity $k_j$ of $r(x_n)$, so that $\deg(r(x_n)) \ge \sum_{j=1}^N k_j$.  Using \eqref{deghub} as in the previous paragraph, we again see that \eqref{xyinjection} is an isomorphism.

Finally, assume that (3) is true. Then Proposition 5 of \cite{BMMT96} implies that $I$ has a Shape Lemma.   For completeness, we sketch the proof.  Write $I$ as the intersection of primary ideals $Q_j$ whose radicals are the maximal ideals of the $\xi^{(j)}$.  The isomorphism on Zariski tangent spaces implies that $\xi^{(j)}$ is curvilinear.  Also, with $\ell_1 = x_n-\xi_n^{(j)}$ and $\ell_i = x_i$ for $i = 1,\dots,{n-1}$, \cite[Proposition 4(10)]{BMMT96} implies that
\[
Q_j = \langle (x_n-\xi_{n}^{(j)})^{m_j},x_i-g_{ij}(x_n),\, i = 1,\dots,n-1\rangle, \text{ where } g_{ij}(x_n) \in \K[x_n].
\]  
It follows that $Q_j \cap \K[x_n] = \langle (x_n-\xi_{n}^{(j)})^{m_j}\rangle$.  Since $r(x_n) \in I \cap \K[x_n] \subseteq Q_j \cap \K[x_n]$, we see that $(x_n-\xi_{n}^{(j)})^{m_j}$ divides $r(x_n)$.   From here, the proof of (1) $\Rightarrow$ (3) shows that  \eqref{xyinjection}  is an isomorphism. 
\end{proof}

Our final lemma uses the scheme-theoretic fibers of the projection morphism $\V(I) \to \Aff^1_\K$  to characterize when $I$ has a Shape Lemma.  Since $\K$ is algebraically closed, a  closed point $\fp \in \mathrm{Spec}(A) = \mathrm{Spec}(\K[x_n]) = \Aff_\K^1$ can be written as $\fp = \langle x_n-\lambda\rangle$ for some $\lambda \in \K$.  For simplicity, set $V = \V(I)$.  Then the scheme-theoretic fiber $V_\fp$ is the subscheme of $V  = \mathrm{Spec}(\K[x_1,\dots,x_n]/I)$ defined as follows.  When $V$ is finite, the decomposition \eqref{prodlocal} can be written
\[
\K[x_1,\dots,x_n]/I \simeq \prod_{\xi \in V} \calO_{V,\xi}.
\]
Take $\xi = (\xi_1,\dots,\xi_n) \in V$.  If $\xi_n \ne \lambda$, then $x_n - \lambda$ is nonvanishing at $\xi$ and hence invertible in $\calO_{V,\xi}$. Thus $\fp \calO_{V,\xi} = \calO_{V,\xi}$ when $\xi_n \ne \lambda$.  It follows that the fiber above $\fp$ is given by
\begin{equation}
\label{Vfp}
V_\fp = \mathrm{Spec}(A/\fp \otimes_A \K[x_1,\dots,x_n]/I) = \mathrm{Spec}\bigg(\!\prod_{\xi \in V,\,\xi_n = \lambda}\!\! \calO_{V,\xi}/\fp \calO_{V,\xi}\bigg).
\end{equation}

\begin{lemma}
\label{shapefiber}
The zero-dimensional ideal $I$ has a Shape Lemma if an only if for all closed points $\fp \in \Aff_\K^1$, the fiber $V_\fp$ is either empty or consists of a single reduced point.
\end{lemma}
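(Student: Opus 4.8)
The plan is to prove both implications by combining the explicit description \eqref{Vfp} of the scheme-theoretic fiber with Lemmas \ref{shapesings} and \ref{singsshape1}. The common first step is to translate the fiber condition into commutative algebra. Write $\fp = \langle x_n-\lambda\rangle$. For any $\xi \in V$ with $\xi_n = \lambda$, the element $x_n-\lambda$ vanishes at $\xi$, so $\fp\calO_{V,\xi} \subseteq \fm_\xi$ and the factor $\calO_{V,\xi}/\fp\calO_{V,\xi}$ appearing in \eqref{Vfp} is a nonzero local Artinian $\K$-algebra with residue field $\K$. Counting $\K$-dimensions, $V_\fp$ is empty exactly when no $\xi \in V$ has $\xi_n = \lambda$, and $V_\fp$ consists of a single reduced point exactly when there is a \emph{unique} $\xi \in V$ with $\xi_n = \lambda$ and, for that $\xi$, $\calO_{V,\xi}/\fp\calO_{V,\xi} \simeq \K$, i.e.\ $\fp\calO_{V,\xi} = \fm_\xi$. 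Hence the condition ``$V_\fp$ is empty or a single reduced point for every closed point $\fp \in \Aff_\K^1$'' is equivalent to the conjunction of (a) the projection $V \to \Aff_\K^1$ is injective as a map of sets, and (b) for every $\xi \in V$, the maximal ideal $\fm_\xi$ of $\calO_{V,\xi}$ is generated by $x_n-\xi_n$.

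For the forward direction, assume $I$ has a Shape Lemma. Then (a) is the injectivity statement of Lemma \ref{shapesings}. For (b), the proof of Lemma \ref{shapesings} identifies $\calO_{V,\xi} \simeq \K[x_n]/\langle (x_n-\xi_n)^{e_\xi}\rangle$ via projection onto the $x_n$-axis; under this identification $\fp\calO_{V,\xi}$ becomes the maximal ideal $\langle x_n-\xi_n\rangle$, so (b) holds, and every fiber is empty or a single reduced point.

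For the converse, assume (a) and (b), and let $I \cap \K[x_n] = \langle r(x_n)\rangle$. By (b), each $\calO_{V,\xi}$ is a local Artinian $\K$-algebra with residue field $\K$ and principal maximal ideal generated by $\pi := x_n-\xi_n$; filtering by powers of $\fm_\xi$ shows $\calO_{V,\xi} = \K[\pi]$ with $\pi^{e_\xi}=0\ne\pi^{e_\xi-1}$, where $e_\xi := \dim_\K\calO_{V,\xi}$, so the map $\K[x_n]\to\calO_{V,\xi}$ induced by projection is surjective with kernel $\langle(x_n-\xi_n)^{e_\xi}\rangle$. Since $r \in I$, its image in $\calO_{V,\xi}$ vanishes, hence $(x_n-\xi_n)^{e_\xi}$ divides $r(x_n)$; as the $\xi_n$ are distinct by (a), $\prod_{\xi}(x_n-\xi_n)^{e_\xi}$ divides $r(x_n)$, giving $\deg r \ge \sum_{\xi} e_\xi = \dim_\K \K[x_1,\dots,x_n]/I$, while the injection \eqref{xyinjection} gives the reverse inequality. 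Thus \eqref{xyinjection} is an isomorphism, and exactly as in the first paragraph of the proof of Lemma \ref{singsshape1} this produces $g_i(x_n)$ with $[x_i]=[g_i(x_n)]$ in $\K[x_1,\dots,x_n]/I$, whence $I = \langle r(x_n), x_1-g_1(x_n),\dots,x_{n-1}-g_{n-1}(x_n)\rangle$; that is, $I$ has a Shape Lemma. Alternatively, the displayed computation shows $\xi_n$ has multiplicity exactly $e_\xi$ as a root of $r$, so condition (2) of Lemma \ref{singsshape1} holds and that lemma applies.

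I expect the main obstacle to be the bookkeeping in the first paragraph: carefully unwinding \eqref{Vfp} so that ``single reduced point'' is correctly equivalent to the pair of conditions (a) and (b)---in particular, ruling out a single non-reduced point, and ruling out several preimages whose fiber rings happen to combine to small $\K$-dimension. Once that translation is secured, both directions are short, since the local rings $\calO_{V,\xi}$ are pinned down either by the Shape Lemma (forward) or by the principal-maximal-ideal hypothesis (converse).
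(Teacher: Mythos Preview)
Your proof is correct and follows essentially the same approach as the paper's: both directions are handled by translating the fiber condition into the conjunction of set-theoretic injectivity and the statement that $\fm_\xi$ is generated by $x_n-\xi_n$, then invoking Lemmas \ref{shapesings} and \ref{singsshape1}. The only minor variation is in the converse, where the paper uses Nakayama's Lemma to show that the injection $\K[x_n]/\langle(x_n-\xi_n)^{e_\xi}\rangle \hookrightarrow \calO_{V,\xi}$ (with $e_\xi$ the multiplicity of $\xi_n$ in $r$) is surjective, while you instead filter by powers of $\fm_\xi$ to directly identify $\calO_{V,\xi}$ with $\K[\pi]$ and then compare degrees---the same idea, executed slightly differently.
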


\begin{proof} First suppose that $I$ has a Shape Lemma and take $\fp = \langle x_n-\lambda\rangle \subseteq \Aff_\K^1$ as above.  If $\fp$ is not in the image of $V \to \Aff^1_\K$, then $\lambda \ne \xi_n$ for all $\xi \in V$, so that the fiber $V_\fp$ is empty by \eqref{Vfp}.  On the other hand, if $\fp$ is in the image of $V \to \Aff^1_\K$, there is a unique point $\xi \in \V$ with $\xi_n = \lambda$ since $V \to \Aff^1_\K$ is injective as a map of sets by Lemma \ref{shapesings}.  Then the fiber \eqref{Vfp} becomes $V_\fp =  \mathrm{Spec}(\!\calO_{V,\xi}/\fp \calO_{V,\xi})$.  The proof of Lemma \ref{shapesings} implies that 
\[
\calO_{V,\xi} \simeq \K[x_n]/\langle (x_n-\lambda)^{m_\xi}\rangle.
\]
The maximal ideal of $\K[x_n]/\langle (x_n-\lambda)^{m_\xi}\rangle$ is generated by the image of $x_n-\lambda$, so the same is true for $\calO_{V,\xi}$, so that $\fp \calO_{V,\xi}$ is the maximal ideal of $\calO_{V,\xi}$.  Thus $\!\calO_{V,\xi}/\fp \calO_{V,\xi} \simeq \K$, which proves that $V_\fp$ consists of a single reduced point.  

Conversely, suppose that every fiber $V_\fp$ is either empty or consists of a single reduced point.  If $V_\fp$ is nonempty, there is $\xi \in V$ with $x_n = \lambda$.  Since $x_n - \lambda$ vanishes at $\xi$, it gives an element of the maximal ideal $\fm_\xi \subseteq \calO_{V,\xi}$.  Hence $\fp \calO_{V,\xi} \subseteq \fm_\xi$, which proves that $\calO_{V,\xi}/\fp \calO_{V,\xi}$ is a nonzero ring.  Since $V_\fp$ consists of a single reduced point, it follows that there is a unique point $\xi \in V$ that maps to $\fp$.  We conclude that $V \to \Aff_\K^1$ is injective as map of sets.  

Let us examine further the case when $V_\fp$ consists of a single reduced point.  This means $\calO_{V,\xi}/\fp \calO_{V,\xi} \simeq \K$, so that $\fp \calO_{V,\xi}$ equals the maximal ideal  $\fm_\xi \subseteq \calO_{V,\xi}$.  Let $I \cap \K[x_n] = \langle r(x_n)\rangle$ with $r(x_n)$ monic, and write $r(x_n) = \prod_{\xi \in V} (x_n-\xi_n)^{e_\xi}$.  Then for $\xi \in V$ with $\xi_n = \lambda$ the injection \eqref{xyinjection} gives an injection of local rings
\[
\varphi: \K[x_n]/\langle (x_n-\lambda)^{e_\xi}\rangle \longhookrightarrow \calO_{V,\xi}.
\]
Once we prove that $\varphi$ is an isomorphism for all $\xi\in V$, Lemma \ref{singsshape1} will imply that $I$ has a Shape Lemma. It suffices so show that $\varphi$ is surjective.  We will regard $\varphi$ as an inclusion 
\[
\mathcal{R}_\xi := \K[x_n]/\langle (x_n-\lambda)^{e_\xi}\rangle \subseteq \calO_{V,\xi}.
\]
Then $M = \calO_{V,\xi}/\mathcal{R}_\xi$ is a finitely generated $\mathcal{R}$-module, and we have $M/\fp M = 0$
since $\mathcal{R}_\xi/\fp \mathcal{R}_\xi \simeq \calO_{V,\xi}/\fp \calO_{V,\xi} \simeq \K$.  Thus $M = 0$ by Nakayama's Lemma.  We conclude that $\mathcal{R}_\xi = \calO_{V,\xi}$, which proves that $\varphi$ is surjective.
\end{proof}

\section{A Resultant Formula}
\label{appendix}

The goal of this section is to prove Theorem \ref{affResm}.  The resultant $\Rf(x_n)$ featured in this theorem uses the classical multivariable resultant.  Recall that if $A$ is an integral domain and $g_1,\ldots, g_n \in A[x_0,\ldots, x_{n-1}]$ are homogeneous of respective degrees $d_1,\ldots, d_n$, their resultant $\Res_{d_1,\ldots, d_n}(g_1,\ldots, g_n)\in A$ vanishes if and only if the system $g_1=\ldots=g_n=0$ has a solution in $\PP_{\overline{k(A})}^{n-1}$. where $\overline{k(A)}$ is the algebraic closure of the field of fractions of $A$.  The general theory of resultants is developed in \cite{Jou}.  See also \cite[Chapter 3]{UAG} when $A=\C$.

As in Section \ref{introsec}, we regard $f_1,\ldots, f_n \in \K[x_1,\ldots, x_n]$ as lying in $A[x_1,\ldots, x_{n-1}]$ with $A=\K[x_n]$ and let $f_1^h,\ldots, f_n^h\in A[x_0,x_1,\ldots, x_{n-1}]$ be their homogenizations  with respect to the new variable $x_0$  up to degrees $d_1,\ldots, d_n$ respectively, with $d_i=\deg_{x_1,\ldots, x_{n-1}}(f_i)$ for $i = 1,\dots, n$.  Then
\[
\Rf(x_n):=\Res_{d_1,\ldots, d_n}(f_1^h,\ldots, f_n^h) \in \K[x_n].
\]
Being homogeneous with respect to $x_0,\dots,x_{n-1}$, the polynomials $f_1^h,\ldots, f_n^h$ define 
\[
\V(f_1^h,\ldots, f_n^h) \subseteq  \PP^{n-1}_\K \times_\K \Aff_\K^1.
\]
Theorem \ref{affResm} states that when  $\V(f_1^h,\dots,f_n^h)$ is finite, 
\begin{equation}
\label{meqb}
\Rf(x_n) = c \!\!\prod_{\xi \in \V(f_1^h,\dots,f_n^h)}\!\! (x_n - \xi_n)^{m_\xi}
\end{equation}
for some $c \in \K^\times$. Here,  $\xi = ([\xi_0\co\ldots\co\xi_{n-1}],\xi_n) \in  \PP^{n-1}_\K \times_\K \Aff_\K^1$, and $m_\xi$ is the Hilbert-Samuel multiplicity of $\xi$. 

Our proof of \eqref{meqb} will use some results from \cite{SS}.  We will need the following algebraic characterization of when $\V(f_1^h,\dots,f_n^h)$ is finite:

\begin{proposition}
\label{finiteRegSeq}
$V = \V(f_1^h,\dots,f_n^h) \subseteq \PP^{n-1}_\K \times_\K \Aff^1_\K$ is finite if and only if $f_1^h,\dots,f_n^h$ form a regular sequence in $A[x_0,x_1,\dots,x_{n-1}]$.
\end{proposition}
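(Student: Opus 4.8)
The plan is to translate both conditions into one statement about the dimension of the affine scheme cut out by $f_1^h,\dots,f_n^h$ in affine $(n{+}1)$-space, exploiting that $A[x_0,\dots,x_{n-1}] = \K[x_0,\dots,x_n]$ is a Cohen--Macaulay ring of dimension $n+1$. Write $R = \K[x_0,\dots,x_n]$ and $J = \langle f_1^h,\dots,f_n^h\rangle$.

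\emph{Algebraic step.} Since each $f_i^h$ is homogeneous of degree $d_i \ge 1$ in $x_0,\dots,x_{n-1}$, every monomial occurring in $f_i^h$ lies in $\langle x_0,\dots,x_{n-1}\rangle$, so $J \subseteq \langle x_0,\dots,x_{n-1}\rangle$; in particular $J$ is a proper ideal, and by Krull's height theorem $\mathrm{ht}(J) \le n$. I would then invoke the standard fact that in a Cohen--Macaulay ring, $n$ elements generating a proper ideal form a regular sequence if and only if that ideal has height $n$ (so, as a byproduct, the regular-sequence property here is independent of the ordering of the $f_i^h$). Finally, because $R$ is an affine domain over $\K$, one has $\mathrm{ht}(J) + \dim(R/J) = \dim R = n+1$. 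Combining these, $f_1^h,\dots,f_n^h$ is a regular sequence $\iff \mathrm{ht}(J) = n \iff \dim(R/J) = 1$.

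\emph{Geometric step.} It remains to show $\dim(R/J) = 1$ if and only if $V := \V(f_1^h,\dots,f_n^h) \subseteq \PP^{n-1}_\K \times_\K \Aff_\K^1$ is finite. Consider the affine scheme $C = \mathrm{Spec}(R/J) = \V(f_1^h,\dots,f_n^h) \subseteq \Aff_\K^{n+1}$; it is stable under the $\mathbb{G}_m$-action scaling $x_0,\dots,x_{n-1}$, and from $J \subseteq \langle x_0,\dots,x_{n-1}\rangle$ it always contains the $x_n$-axis $L := \V(x_0,\dots,x_{n-1})$, which has dimension $1$. The projection $\pi\colon \Aff_\K^{n+1} \setminus L \to \PP^{n-1}_\K \times_\K \Aff_\K^1$, $(x_0,\dots,x_{n-1},x_n) \mapsto ([x_0\co\dots\co x_{n-1}],x_n)$, is a Zariski-locally trivial $\mathbb{G}_m$-bundle, and homogeneity of the $f_i^h$ in $x_0,\dots,x_{n-1}$ gives $C \setminus L = \pi^{-1}(V)$. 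Hence, if $V \ne \emptyset$ then $\dim(C \setminus L) = \dim V + 1$, while $C = L$ if $V = \emptyset$; in all cases $\dim C = \max(1,\, \dim V + 1)$. Therefore $\dim(R/J) = \dim C = 1$ exactly when $\dim V \le 0$, i.e.\ exactly when $V$ is finite, which together with the algebraic step gives the equivalence.

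The step I expect to require the most care is the bookkeeping around the line $L$: because $d_i \ge 1$ for all $i$, the affine cone $C$ is never $0$-dimensional, so one cannot simply equate $\dim(R/J)$ with $\dim V$, and the point is to separate the always-present component $L$ from the honest cone over $V$ to obtain the clean relation $\dim C = \max(1,\dim V+1)$. By contrast, the Cohen--Macaulay height criterion and the identity $\mathrm{ht}(J)+\dim(R/J) = \dim R$ are routine once one notes that $R$ is a polynomial ring over a field, so the difficulty is not there.
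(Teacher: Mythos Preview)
Your overall strategy---translate both conditions into $\dim(R/J)=1$ for $R=\K[x_0,\dots,x_n]$, then relate this to $\dim V$ via the affine cone over $V$---is sound, and it is a genuinely different route from the paper's proof. The paper instead localizes at maximal ideals $\fm\subseteq A=\K[x_n]$ and argues in each ${}^*$local ring $A_\fm[x_0,\dots,x_{n-1}]$ separately, invoking results from \cite{BH} and \cite{SS2}. Your global approach is more self-contained, and the geometric step (separating off the line $L$ and reading $\dim C=\max(1,\dim V+1)$) is carried out correctly.

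There is, however, a real gap in the algebraic step. The ``standard fact'' you invoke---that in a Cohen--Macaulay ring, $n$ elements generating a proper ideal of height $n$ form a regular sequence---is \emph{false} without a local or graded hypothesis. In $\K[x,y,z]$ the sequence $xz,\ yz,\ z-1$ generates the height-$3$ ideal $\langle x,y,z-1\rangle$, yet $yz$ is a zerodivisor modulo $\langle xz\rangle$ (since $x\cdot yz\equiv 0$ while $x\not\equiv 0$). The sequence is Koszul-regular, and the reordering $z-1,\,xz,\,yz$ \emph{is} regular, but the given order is not; so your parenthetical about order-independence is exactly where the argument breaks.

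The fix is short and uses structure you already have: the $f_i^h$ are homogeneous of positive degree in the $\N$-grading of $R$ with $\deg x_j=1$ for $0\le j\le n-1$ and $\deg x_n=0$. For homogeneous elements of positive degree in a graded Noetherian ring, graded Nakayama (applied to the Koszul long exact sequence) shows that $H_1$ of the Koszul complex vanishing forces the sequence to be regular, in any order. Since $R$ is Cohen--Macaulay, $\mathrm{ht}(J)=n$ gives $\mathrm{grade}(J,R)=n$, hence Koszul acyclicity by depth sensitivity, and the graded observation then yields regularity. Add this sentence and your proof goes through. (The paper resolves the same issue by passing to the ${}^*$local rings $A_\fm[x_0,\dots,x_{n-1}]$, where the local form of the height criterion applies directly---so both arguments ultimately lean on the positive-degree homogeneity, but yours does so globally.)
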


\begin{proof}
If $f_1^h,\dots,f_n^h$ form a regular sequence, then for each $i = 0,\dots,n-1$, their dehomogenizations in $\K[x_0,\dots,\hat x_i, \dots,x_{n-1},x_n]$ form a regular sequence and hence define a zero-dimensional subscheme of $U_i \times_\K \Aff_\K^1$, where $U_i \subseteq \PP^{n-1}_\K$ is the affine open defined by $x_i \ne 0$. The $U_i$ cover $\PP^{n-1}_\K$, which proves that $V$ is finite. 


Conversely, assume that $V$ is finite. Let $A_\fm$ be the localization of $A = \K[x_n]$ at a maximal ideal $\fm \subseteq A$. It suffices to prove that the homogeneous elements $f_1^h,\dots,f_n^h$ form a regular sequence in $A_\fm[x_0,\dots,x_{n-1}]$.  To prove this, let $I_\fm \subseteq A_\fm[x_0,\dots,x_{n-1}]$ be generated by $f_1^h,\dots,f_n^h$.  Then $\V(I_\fm) \subseteq \PP^{n-1}_\K \times_\K \mathrm{Spec}(A_\fm)$.  Since $V = \V(f_1^h,\dots,f_n^h) \subseteq \PP^{n-1}_\K \times_\K \Aff^1_\K$ is finite, $\V(I_\fm)$ is either empty, or nonempty and finite.  We consider each case separately.

First suppose that $\V(I_\fm) = \emptyset$.  We claim that $A_\fm[x_0,\dots,x_{n-1}]/I_\fm$ is finite over $A_m$.  This is easy, since $\V(I_\fm) \subseteq \mathrm{Proj}(A_\fm[x_0,\dots,x_{n-1}])$ satisfies
\[
\V(I_\fm) = \emptyset \iff \langle x_0,\dots,x_{n-1}\rangle \subseteq \sqrt{I_\fm} \iff \langle x_0,\dots,x_{n-1}\rangle^N  \subseteq I_\fm\ \text{for some}\ N
\]
by standard properties of $\mathrm{Proj}$.  If follows that $A_\fm[x_0,\dots,x_{n-1}]/I_\fm$ is finite over $A_m$ when $\V(I_\fm) = \emptyset$.  By (3) $\Rightarrow$ (1) of Theorem 7.3 of \cite{SS2}, $I_\fm$ is a complete intersection, so that $f_1^h,\dots,f_n^h$ form a regular sequence in $A_\fm[x_0,\dots,x_{n-1}]$.  

Next suppose that $\V(I_\fm)$ is nonempty and finite.  Here, we will use the theory of ${}^*$local rings from \cite[Section 1.5]{BH}.    A proper homogeneous ideal is a graded ring $R$ is \emph{${}^*$maximal }if the only strictly larger homogenous ideal is $R$ itself, and $R$ is \emph{${}^*$local} if it has a unique ${}^*$maximal ideal.  The basic idea is that ${}^*$local rings are the graded counterparts of local rings.

In our situation, $R = A_\fm[x_0,\dots,x_{n-1}]$.   Since $A_\fm$ is a local ring with maximal ideal $\fm A_\fm$, it is easy to see that $R$ is  ${}^*$local with ${}^*$maximal ideal $\mathfrak{M} = \fm A_\fm + \langle x_0,\dots,x_{n-1}\rangle$.  The fact that $\mathfrak{M}$ is maximal in the usual sense simplifies some features of the theory.  For example, the \emph{${}^*$dimension} of $R$ is defined to be the height of the ${}^*$maximal ideal $\mathfrak{M}$.  Since $\mathfrak{M}$ is maximal, this is just the dimension of $R$, so that ${}^*\!\dim R = \dim R = n+1$.

The quotient $R/I_\fm$ is also ${}^*$local, and its ${}^*$maximal ideal is again maximal in the usual sense.  Thus ${}^*\!\dim R/I_\fm = \dim R/I_\fm = 1$ since $V(I_\fm) \subseteq \mathrm{Proj}(R)$ is finite and nonempty.  Hence
\begin{equation}
\label{codimn}
{}^*\!\dim R/I_\fm = 1 = (n+1) - n = {}^*\!\dim R - n.
\end{equation}
However, we also know that $R$ is Cohen-Macaulay and that $I_\fm$ is generated by $n$ elements.  If we replace $R$ with a Cohen-Macaulay local ring and ${}^*$\hskip-.25pt$\dim$ by $\dim$, then it is well known that \eqref{codimn} implies that the $n$ generators form a regular sequence (for example, this follows from Theorem 2.12 and Proposition A.4 of \cite{BH}).  Since the same result holds in the ${}^*$local setting, we conclude that $f_1^h,\dots,f_n^h$ form a regular sequence in $R$, as desired.
\end{proof}

\begin{proof}[Proof of Theorem \ref{affResm}]  We need to prove \eqref{meqb}. The finiteness of $V = \V(f_1^h,\dots,f_n^h)$ implies that the resultant $\Rf(x_n)$ is nonzero and hence can be written 
\[
\Rf(x_n) = c \!\!\!\!\prod_{\lambda \in \V(\Rf(x_n))}\!\!\!\! (x_n - \lambda)^{e_\lambda}
\]
for integers $e_{\lambda} \ge 1$ and a nonzero constant $c \in \K$.

We know that $\Rf(x_n)$ vanishes at every point of $V$, and since $\K$ is algebraically closed, the universal property of the resultant implies that any root $\lambda$ of $\Rf(x_n)$ comes from a solution $\xi = ([\xi_0\co\ldots\co\xi_{n-1}],\xi_n) \in V$ with $\xi_n = \lambda$.  Thus \eqref{meqb} will follow once we prove that 
\begin{equation}
\label{summult}
e_\lambda = \!\!\sum_{\xi \in V,\,\xi_n=\lambda}\!\! m_\xi,
\end{equation}
where $m_\xi$ is the Hilbert-Samuel multiplicity of $\xi$.  The sum on the right is finite since $V$ is finite by assumption.

Since $V$ is finite, Proposition \ref{finiteRegSeq} implies that $f_1^h,\dots,f_n^h$ form a regular sequence in $A[x_0,\dots,x_{n-1}]$.  This allows us to use the results of \cite{SS}.  The \emph{resultant ideal} $\mathfrak{R} \subseteq A$ is generated by $\Rf(x_n)$.  We also have the $A$-algebra of global sections
\[
B = \Gamma(V,\calO_V) = \prod_{\xi \in V} \calO_{V,\xi}.
\]
Note that $V = \mathrm{Spec}(B)$.  

A root $\lambda \in \K$ of $\Rf(x_n)$ gives a point $\fp \in \mathrm{Spec}(A)$ for $\fp = \langle x_n-\lambda\rangle \subseteq A$.  Localizing the $A$-module $B$ at $\fp$ gives
\[
B_\fp =  \!\!\prod_{\xi \in V,\,\xi_n = \lambda}\!\! \calO_{V,\xi}.
\]
In the local ring $A_\fp$, the resultant ideal $\mathfrak{R} = \langle \Rf(x_n)\rangle$ localizes to $\mathfrak{R}_\fp = \langle (x_n - \lambda)^{e_\lambda} \rangle$.  Thus
\[
A_\fp/\mathfrak{R}_\fp = A_\fp /\langle (x_n - \lambda)^{e_\lambda} \rangle.
\]

Since $\fp$ has codimension 1, \cite[Theorem 2.1]{SS} implies that $\mathrm{length}(B_\fp) = \mathrm{length}(A_\fp/\mathfrak{R}_\fp)$.  Combining this with the previous paragraph, we obtain
\begin{align*}
\!\!\sum_{\xi \in V,\,\xi_n=\lambda}\!\!  \mathrm{length}(\calO_{V,\xi}) &= \mathrm{length}(B_\fp)\\ &= \mathrm{length}(A_\fp/\mathfrak{R}_\fp) = \mathrm{length}(A_\fp/\langle (x_n - \lambda)^{e_{\lambda}} \rangle) = e_\lambda.
\end{align*}
Since $V = \V(f_1^h,\dots,f_n^h)$ is a local complete intersection, the length of $\calO_{V,\xi}$ is the Hilbert-Samuel multiplicity $m_\xi$, and \eqref{summult} follows immediately.  This completes the proof of Theorem \ref{affResm}.
\end{proof}

\begin{remark}
The proof just given uses the localization $B_\fp$ to express the exponent $e_\lambda$ as a sum of Hilbert-Samuel multiplicities.  It is natural to ask how this relates to the fiber $V_\fp$ of the projection $V \hookrightarrow \PP^{n-1}_\K \times_\K \Aff^1_\K\to \Aff^1_\K$.   Since $V = \mathrm{Spec}(B)$, the fiber above the maximal ideal $\fp = \langle x_n-\lambda\rangle \in \mathrm{Spec}(A) = \Aff^1_\K$ is the affine scheme
\[
V_\fp = \mathrm{Spec}(A/\fp \otimes_A B) = \mathrm{Spec}(B/\fp B) = \mathrm{Spec}\bigg(\!\prod_{\xi \in V,\,\xi_n = \lambda}\!\! \calO_{V,\xi}/\fp \calO_{V,\xi}\bigg)
\]
since $\fp \calO_{V,\xi} = \calO_{V,\xi}$ when $\xi_n \ne \lambda$.  Thus
\[
\deg V_\fp =   \mathrm{length}(B/\fp B) = \!\!\sum_{\xi \in V,\,\xi_n = \lambda}\!\!  \mathrm{length}(\calO_{V,\xi}/\fp \calO_{V,\xi}) \le \!\!\sum_{\xi \in V,\,\xi_n = \lambda}\!\!  \mathrm{length}(\calO_{V,\xi}).
\]
Hence the degree of the fiber is bounded above by the sum of the lengths of the local rings, and it is easy to see that equality holds if and only if $\fp B_\fp = 0$ (see \cite[Theorem 2.6]{SS}).  

For an example where $\fp B_\fp \ne 0$, consider $f_1 = x_1^2-x_2^2$ and $f_2 = x_1^2+x_2^2$ in $\C[x_1,x_2]$.  These homogenize to $f_1^h = x_1^2-x_0^2x_2^2$ and $f_2^h = x_1^2+x_0^2x_2^2$ in $A[x_0,x_1]$, $A = \C[x_2]$, with resultant $\Rf(x_2) = 4x_2^4$.  To analyze the scheme $V = \V(f_1^h,f_2^h)$, note that the equations $x_1^2-x_0^2x_2^2 = x_1^2+x_0^2x_2^2 = 0$ imply that $x_1 = 0$, so that $x_0 \ne 0$ since $x_0,x_1$ are homogeneous coordinates for $\PP^1_\K$.  Thus, there are no solutions at $\infty$, and since $\langle f_1,f_2\rangle = \langle x_1^2,x_2^2\rangle$, we obtain
\[
V = \V(f_1,f_2) =  \mathrm{Spec}(B), \quad B = \C[x_1,x_2]/\langle x_1^2,x_2^2\rangle.
\]
Thus $V$ consists of a single point of multiplicity $4$.  However, for $\fp = \langle x_2 \rangle \in  \mathrm{Spec}(\C[x_2])$, we have $B_\fp = B$, and the fiber is
\[
V_\fp = \mathrm{Spec}(B/\fp B), \quad B/\fp B = B/x_2 B \simeq \C[x_1,x_2]/\langle x_1^2,x_2\rangle,
\]
which consists of a single point of multiplicity $2 \ne 4$.   The discrepancy arises because $\fp B_\fp \ne 0$.
\end{remark}

\section{The Elimination Ideal and Resultants}
\label{elimsec}

As in Section \ref{introsec}, $f_1,\dots,f_n \in \K[x_1,\dots,x_n]$ give $\Rf(x_n) = \Res_{d_1,\dots,d_n}(f_1^h,\dots,f_n^h)$.  It is well known that $\Rf(x_n)$ belongs to the elimination ideal $I\cap \K[x_n]$ for $I = \langle f_1,\dots,f_n\rangle$.  Easy examples show that $\Rf(x_n)$ does not always generate $I\cap \K[x_n]$.  Even when the Shape Lemma holds,  $\Rf(x_n)$ may fail to generate $I\cap \K[x_n]$, though in this case, Theorem \ref{elimideal} to be proved below explains what goes wrong.

For the moment, assume only that $I = \langle f_1,\dots,f_n\rangle$ is zero-dimensional, so that $\V(f_1,\dots,f_n) \subseteq \Aff_\K^n$ is finite.  Let $I\cap \K[x_n] = \langle r(x_n)\rangle$.  Then $r(x_n) \mid \Rf(x_n)$, which implies that every root of $r(x_n)$ is also a root of $\Rf(x_n)$.  It is natural to inquire about the converse, i.e., whether every root of $\Rf(x_n)$ is also a root of $r(x_n)$.

So suppose that $\xi_n$ is a root of $\Rf(x_n)$.  Then
\begin{equation}
\label{sec3eq1}
0 = \Rf(\xi_n) = \Res_{d_1,\dots,d_n}(f_1^h(x_0,\dots,x_{n-1},\xi_n),\dots,f_n^h(x_0,\dots,x_{n-1},\xi_n)).
\end{equation}
Recall that $d_i = \deg_{x_1,\dots,x_{n-1}}(f_i)$ and $f_i^h(x_0,\dots,x_n)$ is homogeneous of degree $d_i$ in $x_0,\dots,x_{n-1}$.  By the universal property of the multivariable resultant, \eqref{sec3eq1} implies that the equations
\[
f_1^h(x_0,\dots,x_{n-1},\xi_n) = \cdots = f_n^h(x_0,\dots,x_{n-1},\xi_n) = 0
\]
have a nontrivial solutions $(\xi_0,\dots,\xi_{n-1}) \in \PP^{n-1}_\K$.   If $\xi_0 \ne 0$, then we can assume that $\xi_0 = 1$, and it follows easily that $(\xi_1,\dots,\xi_{n}) \in \V(f_1,\dots,f_n)$.  Thus $r(\xi_n) = 0$ since $r \in I$.  On the other hand, if $\xi_0 = 0$, then this solution may cause $\Rf(x_n)$ to relate poorly to $r(x_n)$.  Here are two simple examples with $n = 2$:

\begin{example}
\label{sec3ex1}
Let $f_1 = 1+ 2x_1 + x_2 + 2x_1x_2$ and $f_2 = 3 + x_1 + x_2 + x_1x_2$ in $\C[x_1,x_2]$.  One computes that $I = \langle f_1,f_2\rangle = \langle x_2+5, x_1+ \tfrac12\rangle$, which has a Shape Lemma.  Thus $I\cap \K[x_2] = \langle x_2+5\rangle$. However, 
\[
\Rf(x_2) = \Res_{1,1}(f_1^h,f_2^h) = (x_2+1)(x_2+5),
\]
so that $\Rf(x_2)$ does not generate $I\cap \C[x_2]$ because of the extraneous factor of $x_2+1$.  The reason for this factor is easy to see.  Since $f_1^h(x_0,x_1,x_2) = x_0+ 2x_1 + x_0x_2 + 2x_1x_2$ and $f_2^h(x_0,x_1,x_2) = 3x_0 + x_1 + x_0x_2 + x_1x_2$, we have
\begin{align*}
f_1^h(0,x_1,x_2) = f_2^h(0,x_1,x_2) = 0 &\Longleftrightarrow 2x_1 + 2x_1x_2 = x_1 +  x_1x_2 = 0\\ 
&\Longleftrightarrow 2x_1(1+x_2) = x_1(1+x_2) = 0.
\end{align*}
But $x_0,x_1$ are homogeneous coordinates, so $x_0 = 0$ implies $x_1 \ne 0$, and then we can assume $x_1 = 1$.  
The solution $(x_0,x_1,x_2) = (0,1,-1)$ ``at $\infty$'' accounts for the extraneous factor of $x_2+1$ in the resultant.
\end{example}

\begin{example}
\label{sec3ex2}
Let $f_1 = x_1^2 x_2 + x_1 + x_2+1$ and $f_2 = x_1^3 x_2^2 + x_1 - x_2 + 1$ in $\C[x_1,x_2]$.    One  computes that 
\[
I  = \langle f_1,f_2\rangle = \langle x_2(x_2^4-3x_2^3-x_2^2-5x_2+6),4x_1 +x_2^4 + 3x_2^3 +x_2^2-x_2 +4\rangle.
\]
Thus $(-1,0)$ is a solution of $f_1 = f_2 = 0$, and since $x_2^4-3x_2^3-x_2^2-5x_2+6$ is irreducible over $\Q$, its roots  are distinct and give four more solutions.  Since $I$ has a Shape Lemma,  all solutions have multiplicity one by Lemma~\ref{shapesings}.   However,
\[
\Rf(x_3) = \Res_{2,3}(f_1^h,f_2^h) =  x_2^2(x_2^4-3x_2^3-x_2^2-5x_2+6),
\]
does not generate the elimination ideal $I \cap \C[x_2] = \langle x_2(x_2^4-3x_2^3-x_2^2-5x_2+6)\rangle$.  One can check that $(x_0,x_1,x_2) = (0,1,0)$ is a solution ``at $\infty$.''  This explains why the exponent of $x_2$ in the resultant is strictly bigger than the exponent that appears in the generator of the elimination ideal. 
\end{example}

In the setting of this paper, here is precisely what we mean by ``at $\infty$'':

\begin{definition}
\label{solatinfty}
Polynomials $f_1,\dots,f_n \in \K[x_1,\dots,x_n]$ have a \textbf {solution at {\boldmath$\infty$} with respect to {\boldmath$x_1,\dots,x_{n-1}$}} if there are $\xi_1,\dots,\xi_n \in \K$ with $(\xi_1,\dots,\xi_{n-1}) \ne (0,\dots,0)$ such that the homogenizations $f_1^h,\dots,f_n^h \in \K[x_0,\dots,x_n]$ satisfy
\[
f_1^h(0,\xi_1,\dots,\xi_{n-1},\xi_n) = \cdots = f_n^h(0,\xi_1,\dots,\xi_{n-1},\xi_n) = 0.
\]
\end{definition}

We will write ``solution at $\infty$'' when the context is clear.  Solutions at $\infty$ 
are easy to understand from a geometric point of view.  Being homogeneous with respect to $x_0,x_1,\dots,x_{n-1}$ means that $f_1^h,\dots,f_n^h$ define  $\V(f_1^h,\dots,f_n^h) \subseteq \PP^{n-1}_\K \times_\K \Aff_\K^1$, and decomposing $\PP^{n-1}_\K$ into the affine space $\Aff_\K^{n-1}$ (where $x_0 \ne 0$) and the hyperplane at $\infty$ (where $x_0 = 0$) gives the disjoint union
\begin{equation}
\label{Vfhdecomp}
\begin{aligned}
\V(f_1^h,\dots,f_n^h) &= \V(f_1,\dots,f_n) \cup \{\text{solutions with } x_0 = 0\}\\
&= \V(f_1,\dots,f_n) \cup \{\text{solutions at } \infty\}.
\end{aligned}
\end{equation}

Recall that Theorem \ref{elimideal} states that if  $I = \langle f_1,\dots,f_n\rangle$ is a zero-dimensional ideal such that projection $\V(I) \to \Aff_\K^1$ onto the $n$th coordinate is injective as a map of sets, then any two of the following three conditions imply the third{\rm:}
\begin{enumerate}
\item $I$ has a Shape Lemma.
\item $f_1,\dots,f_n$ have no solutions at $\infty$.
\item $I \cap \K[x_n] = \langle \Rf(x_n)\rangle$.
\end{enumerate}
In other words, if any one of the above conditions holds, then the other two are equivalent.

\begin{proof}[Proof of Theorem \ref{elimideal}]
First note that when (1) holds,  $I$ can be written in the form $\langle r(x_n), x_1-g_1(x_n),\dots,x_{n-1}-g_{n-1}(x_n)\rangle$, which implies $I \cap \K[x_n] = \langle r(x_n)\rangle$. By our injectivity hypothesis, the roots $\xi_n$ of $r(x_n)$ are indexed by solutions $\xi = (\xi_1,\dots,\xi_n) \in \V(I) = \V(f_1,\dots,f_n)$, and by Lemma~\ref{shapesings}, the multiplicity of $\xi_n$ as a root of $r(x_n)$ is the  multiplicity $m_\xi$ of $\xi$ as a solution.  Since $r(x_n)$ can be assumed to be monic, we see that (1) allows us to write
\begin{equation}
\label{sec3rxneq}
 r(x_n) = \prod_{\xi \in \V(f_1,\dots,f_n)}\!\!\ (x_n - \xi_n)^{m_\xi}.
\end{equation}

(1) \& (2) $\Rightarrow$ (3): By (1), $r(x_n)$ is given by \eqref{sec3rxneq}.  
Note that $\V(I)$ is finite since $I$ is zero-dimensional.  Combining this with (2) and \eqref{Vfhdecomp}, we see that $\V(f_1^h,\dots,f_n^h)$ is also finite.  Then the resultant formula in Theorem \ref{affResm} implies that for some nonzero constant $c$,
\[
\Rf(x_n) =  c \!\! \prod_{\xi \in \V(f_1^h,\dots,f_n^h)}\!\!\ (x_n - \xi_n)^{m_\xi} = c \!\! \prod_{\xi \in \V(f_1,\dots,f_n)} \!\! (x_n - \xi_n)^{m_\xi} 
\]
where the second equality follows since there are no solutions at $\infty$.  By \eqref{sec3rxneq},  $\Rf(x_n) =  c \hskip1pt r(x_n)$, and (3) follows.

(2) \& (3) $\Rightarrow$ (1): As in the previous paragraph, our hypothesis and (2) imply that $\Rf(x_n) =  c \prod_{\xi \in \V(f_1,\dots,f_n)} (x_n - \xi_n)^{m_\xi}$.  By (3), this is $r(x_n)$ up to a constant, so that by our injectivity hypothesis, for every $\xi = (\xi_1,\dots,\xi_n) \in \V(I)$, the multiplicity of $\xi \in \V(I)$ equals the multiplicity of $\xi_n$ as a root of $r(x_n)$.  Then (1) follows from Lemma \ref{singsshape1}.

(1) \& (3) $\Rightarrow$ (2): There are two cases where this implication is easy.  First, if there are only finitely many solutions at $\infty$, then the product formula of Theorem \ref{affResm} shows that solutions at infinity contribute factors of positive degree to the resultant, which makes it easy to see that $\deg(\Rf(x_n)) > \deg(r(x_n))$ when there are solutions at $\infty$.  A second easy case is when there is a solution at $\infty$ that does not lie above the roots of $r(x_n)$.  Here, the universal property of the resultant implies that $\Rf(x_n)$ has more distinct roots than $r(x_n)$, so again the two cannot be equal.   

However, when there are infinitely many points at $\infty$, all of which lie over roots of $r(x_n)$ (Example \ref{infsolinf} shows that this can happen), the two previous cases do not apply.  Hence we need to take a different approach.  We will show that if (1) holds and there is a solution at $\infty$, then $\deg(\Rf(x_n)) > \deg(r(x_n))$, which implies that $I \cap \K[x_n] \ne \langle \Rf(x_n)\rangle$.  Our proof will use a deformation argument to reduce to the case when $\V(f_1^h,\dots,f_n^h)$ is finite

Suppose that there is a point $\xi^* = ([0\co\xi^*_1\co\ldots\co \xi^*_{n-1}],\xi^*_n)\in\V(f_1^h,\ldots, f_n^h)$.  Without loss of generality we may assume $\xi^*_1\neq0$.  Then define 
\begingroup
\allowdisplaybreaks
\begin{align*}
g_1&=x_0^{d_1}\\[-2pt]
g_2&=(\xi^*_2x_1-\xi^*_1x_2)^{d_2}\\[-5pt]
&\ \,\vdots \\[-3pt]
g_{n-1}&=(\xi^*_{n-1}x_1-\xi^*_1x_{n-1})^{d_{n-1}}\\[-2pt]
g_{n}&= x_1^{d_n}(x_n-\xi^*_n).
\end{align*}
\endgroup

Let $\varepsilon$ be a new parameter, and consider the  deformed system
\begin{equation}\label{fie}
f^h_{i,\varepsilon}:=f^h_i+\varepsilon \cdot g_i, \quad i = 1, \dots, n,
\end{equation}
We will consider solutions of the deformed system over the algebraically closed field $\widehat\K$ whose elements $\hat{h}$ consist of formal series 
\[
\hat{h} = \sum_{e \in S} c_e \varepsilon^e 
\]
where $S \subset \Q$ is a well-ordered subset depending on $\hat{h}$, $c_e \in \K$ for all $e \in S$, and $S$ has the property that for some positive integer $m$,
\[
mS \subseteq \begin{cases} \Z & \mathrm{ch}(\K) = 0\\ \bigcup_{k = 0}^\infty \tfrac{1}{p^k} \Z & \mathrm{ch}(\K) = p > 0. \end{cases}
\]
When $\mathrm{ch}(\K) = 0$, $\widehat\K$ is the field of Puiseux series  $\bigcup_{m=1}^\infty \K((\varepsilon^{1/m}))$, which is known to be algebraically closed.  When $\mathrm{ch}(\K) = p > 0$, $\widehat\K$ is algebraically closed by \cite{Ray68}.  The paper \cite{Ked01} describes a smaller algebraically closed field, but we prefer $\widehat\K$ because it is easier to describe.  (References to other proofs that $\widehat\K$ is algebraically closed can be found in \cite{Ked01}.)

The solutions of the deformed system give $\V(f_{1,\varepsilon}^h,\ldots, f_{n,\varepsilon}^h) \subseteq \PP^{n-1}_{\widehat\K} \times_{\widehat\K} \Aff_{\widehat\K}^1$.  Note that $\xi^* = ([0\co\xi^*_1\co\ldots \co\xi^*_{n-1}],\xi^*_n)\in\V(f_{1,\varepsilon}^h,\ldots, f_{n,\varepsilon}^h)$ by the construction of the $f_{i,\varepsilon}^h$.  


An element $\hat h = \sum_{s \in S} c_s \varepsilon^s \in \widehat\K$ can be regarded as a generalized Puiseux series.  
We often need to take the limit as $\varepsilon \to 0$.  To explain what this means, suppose that $\hat{h} \ne 0$, so that $\hat h = c_s\hskip1pt \varepsilon^{s} + \cdots$, where $s$ is the minimal element of $S$ such that $c_s \ne 0$ (remember that $S$ is well-ordered).  Thus all other nonzero terms of the expansion are  of the form $c_{s'}\,\varepsilon^{s'}$, with $c_{s'}\in\K$ and $s'>s$ in $S$.   We define $\val(\hat h) = s$.  Note that $\val$ is a discrete valuation on any subfield of $\widehat\K$ that is finitely generated over $\K$.   The limit $\lim_{\varepsilon \to 0}\hat h$ means setting $\varepsilon = 0$ in $\hat h$.  If we write $\hat h = c_s \hskip1pt \varepsilon^{s} + \cdots$ as above, then
\[
\lim_{\varepsilon \to 0} \hat h = \begin{cases} \infty & s < 0\\ c_0 & s = 0\\ 0 & s > 0.\end{cases}
\]
In particular, $\lim_{\varepsilon \to 0} \hat h \in \K^\times$ if and only if $\val(\hat h) = 0$.

We claim that  $\V(f_{1,\varepsilon}^h,\ldots, f_{n,\varepsilon}^h) \subseteq \PP^{n-1}_{\widehat\K} \times_{\widehat\K} \Aff_{\widehat\K}^1$ is finite.  To prove this, first note that  $\lim_{\varepsilon \to 0} \Rfe(x_n) = \Rf(x_n)\neq0$.  Thus $\Rfe (x_n)\neq0$, so that only finitely many last coordinates occur in $\V(f_{1,\varepsilon}^h,\ldots, f_{n,\varepsilon}^h)$.   If $\V(f_{1,\varepsilon}^h,\ldots, f_{n,\varepsilon}^h)$ is infinite, there must thus be some $\hat \xi_n \in \widehat\K$ such that
\[
\V(f_{1,\varepsilon}^h|_{x_n=\hat \xi_n},\ldots, f_{n,\varepsilon}^h|_{x_n=\hat \xi_n}) \subseteq  \PP^{n-1}_{\widehat\K}
\]
has positive dimension.  In particular, this variety must meet the hyperplane $x_1 = 0$, so that ignoring $f_{n,\varepsilon}$ for the moment, the $n-1$ homogeneous equations
\[
f_{1,\varepsilon}^h|_{x_1=0,x_n=\hat \xi_n} = \cdots = f_{n-1,\varepsilon}^h|_{x_1=0,x_n=\hat \xi_n} = 0
\]
in $n-1$ variables $x_0,x_2,\dots,x_{n-1}$ have a nontrivial solution in $\PP^{n-2}_{\widehat\K}$.  Hence
\begin{equation}
\label{Reseq0}
\Res_{d_1,\ldots, d_{n-1}}(f_{1,\varepsilon}^h|_{x_1=0, x_n=\hat \xi_n},\ldots, f_{n-1,\varepsilon}^h|_{x_1=0, x_n=\hat \xi_n}) = 0.
\end{equation}
However, it is easy to see that
\[
\Res_{d_1,\ldots, d_{n-1}}(f_{1,\varepsilon}^h|_{x_1=0,x_n = \hat \xi_n},\ldots, f_{n-1,\varepsilon}^h|_{x_1=0,x_n=\hat \xi_n})= (-\xi^*_1)^N\varepsilon^M\!+\mbox{lower terms in}\, \varepsilon,
\] 
with $N,M\in\N$.  This is clearly nonzero, contradicting \eqref{Reseq0}.  So $\V(f_{1,\varepsilon}^h,\dots,f_{n,\varepsilon}^h)$ must be finite.

By Theorem \ref{affResm}, it follows that there is a nonzero $c(\varepsilon) \in \widehat\K$ such that
\[
\Rfe(x_n)=c(\varepsilon) \!\!\prod_{\hat\xi\in\V(f_{1,\varepsilon}^h,\ldots, f^h_{n,\varepsilon})} \!\!(x_n-\hat\xi_n)^{m_{\hat\xi}}
\]
The limit $\lim_{\varepsilon\to0}\hat{\xi} = \lim_{\varepsilon\to0}([\hat{\xi}_0\co\dots\co\hat{\xi}_{n-1}],\hat{\xi}_n)$ is a point  $\xi$ in the complete variety $\PP^{n-1}_{\K}\times\PP^1_{\K}$.  Let homogeneous coordinates be $x_0,\dots,x_{n-1}$ for $\PP^{n-1}_{\K}$ and $x_n,x_{n+1}$ for $\PP^{1}_{\K}$, where $x_{n+1}$ is a new variable. Recall that $\V(f_1^h,\dots,f_n^h)$ is a subvariety of $\PP^{n-1}_\K \times_\K \Aff_\K^1$.  Thus, if $\hat\xi\in\V(f_{1,\varepsilon}^h,\ldots, f^h_{n,\varepsilon})$, then $\lim_{\varepsilon\to0}\hat{\xi} = \xi$, where
\[
\xi \in \V(f_1^h,\dots,f_n^h) \cup \{x_{n+1} = 0\} =  \underbrace{\V(f_1,\dots,f_n)}_{A} \cup \underbrace{\{x_0 = 0\} \cup \{x_{n+1} = 0\}}_{B}. 
\]
In what follows, we often write $\hat\xi\to\xi$ when $\hat\xi\in\V(f_{1,\varepsilon}^h,\ldots, f^h_{n,\varepsilon})$ and $\lim_{\varepsilon\to0}\hat{\xi} = \xi$. 

Using this notation, we can write $\Rfe(x_n)$ as the product $\Rfe(x_n)=RS$, where 
\[
R = \!\!\prod_{\hat\xi\to\xi\in A} \!\! (x_n-\hat\xi_n)^{m_{\hat\xi}} \ \text{ and } \ 
S = c(\varepsilon)  \!\! \prod_{\hat\xi\to\xi\in B} \!\!(x_n-\hat\xi_n)^{m_{\hat\xi}}.
\]
Note that $x_n-\xi_n^*$ is a factor of $S$ since $\xi^* = ([0\co\xi_1^*\co\dots\co\xi_{n-1}^*],\xi_n^*) \in \V(f_{1,\varepsilon}^h,\ldots, f^h_{n,\varepsilon})$.

The next step is to apply $\lim_{\varepsilon\to0}$ to the equation $\Rfe(x_n)=RS$.  On the left-hand side of this equation, we already observed that $\lim_{\varepsilon\to0}\Rfe(x_n) = \Rf(x_n)$, which is a nonzero polynomial in $x_n$.   To take the limit on the right-hand side, we first focus on $\lim_{\varepsilon\to0} R$.  

All solutions $\xi \in A$ arise as limits $\hat\xi  \to \xi$ for some $\hat\xi \in \V(f_{1,\varepsilon}^h,\dots,f_{n,\varepsilon}^h)$ in a way that  is compatible with multiplicities.  Note also that $\xi = (\xi_1,\dots,\xi_n) = ([1\co\xi_1\co\dots\co\xi_{n-1}],\xi_n) \in \PP^{n-1}_\K \times_\K \Aff_\K^1$, and then $\hat\xi = ([\hat{\xi}_0\co\dots\co\hat{\xi}_{n-1}],\hat{\xi}_n) \to \xi$ implies that  $\lim_{\varepsilon\to0} \hat{\xi}_n = \xi_n$. 
  
Now fix $\xi \in \V(f_1^h,\dots,f_n^h)$ and consider all $\hat{\xi}\in \V(\hat{f}_1,\dots,\hat{f}_n)$ such that $\hat\xi\to\xi$.  The previous paragraph implies that  
\begin{equation}
\label{1}
\lim_{\varepsilon\to0} \prod_{\hat{\xi}\to\xi\in A}(x_n-\hat{\xi}_n)^{m_{\hat{\xi}}} = (x_n-\xi_n)^{m_\xi},
\end{equation}
and since $A = \V(f_1,\dots,f_n)$, it follows immediately that 
\[
\lim_{\varepsilon\to0} R =  \lim_{\varepsilon\to0}  \prod_{\hat\xi\to\xi\in A} (x_n-\hat\xi_n)^{m_{\hat\xi}}\ =  \prod_{\xi \in \V(f_1,\dots,f_n)}\!\!\ (x_n - \xi_n)^{m_\xi} =  r(x_n),
\]
where the last equality follows from \eqref{sec3rxneq}.  This is a nonzero polynomial, so that 
\[
\Rf(x_n) = \lim_{\varepsilon\to0}\Rfe(x_n) = \lim_{\varepsilon\to0} RS = r(x_n) \lim_{\varepsilon\to0} S.
\]
Since $S$ is a polynomial in $x_n$, it follows that $\lim_{\varepsilon\to0} S$ is also a polynomial in $x_n$.  We noted above that $x_n-\xi_n^*$ is a factor of $S$.  Being independent of $\varepsilon$, it becomes a factor of $\lim_{\varepsilon\to0} S$. This shows that $\deg(\Rf(x_n))> \deg(r(x_n))$, concluding the proof of (1) \& (3) $\Rightarrow$ (2).
\end{proof}

\begin{example}
\label{infsolinf}
Let $f_1 = 1 + x_1 + x_1 x_2 x_3$, $f_2 = x_2 + x_1^2x_3$ and $f_3 = 1 +  x_1 +  2x_2 + x_2^2x_3$ in $\C[x_1,x_2,x_3]$.  Then one computes that
\[
I = \langle f_1,f_2,f_3\rangle = \langle x_3^2+6x_3, x_1+ \tfrac29 x_3 + 1 ,x_2 + \tfrac19 x_3\rangle.
\]
Thus $I$ has a Shape Lemma with solutions $(-1,0,0)$ and $(\tfrac13,\tfrac23,-6)$, both of multiplicity one.  Note also that $I\cap \C[x_3] = \langle r(x_3)\rangle$ for $r(x_3) = x_3(x_3+6)$.

However, when we homogenize, we get 
\[
f_1^h = x_0^2 + x_0x_1 + x_1 x_2 x_3,\ f_2^h = x_0x_2 + x_1^2x_3,\ f_3^h = x_0^2 +  x_0x_1 +  2x_0x_2 + x_2^2x_3 
\]
in $A[x_0,x_1,x_2]$, $A = \C[x_3]$.  Using the classical formula for the resultant of three ternary quadrics (see, for example \cite[(2.8)]{UAG}), one obtains 
\[
\Rf(x_3) = \Res_{2,2,2}(f_1^h, f_2^h,f_3^h) = x_3^7(x_3+6).
\]
By Theorem \ref{elimideal}, there must be at least one solution at $\infty$.  In fact, there are a lot, since $f_i^h(0,x_1,x_2,0) = 0$, giving a projective line of solutions at $\infty$, and it is  easy to see that these are the only solutions at $\infty$.  

In the elimination ideal $I\cap \C[x_3] = \langle x_3^2+6x_3\rangle= \langle x_3(x_3+6)\rangle$, the affine solution $(-1,0,0)$ contributes the factor of $x_3$.  But in the resultant, the solutions at $\infty$ (all of which have $x_3 = 0$) cause the exponent of $x_3$ to increase from $1$ to $7$.  This is mysterious.  It would be nice to have a theoretical explanation of the exponent.  
\end{example}

We conclude this section with the special case $n = 2$, where solutions at $\infty$ are easy to understand.  Given $f_1,f_2 \in \K[x_1,x_2]$, write them as
\[
f_i(x_1,x_2) = \sum_{j=0}^{d_i} a_{ij}(x_2) x_1^j, \quad d_i = \deg_{x_1}(f_i),
\]
and let $\lc_{x_1}(f_i) = a_{id_i}(x_2)$ be the leading coefficient of $f_i$ with respect to $x_1$.  Note that $\lc_{x_1}(f_i) \in \K[x_2]$ is nonzero.  Then
\[
f_i^h(x_0,x_1,x_2) = \sum_{j=0}^{d_i} a_{ij}(x_2) x_0^{d_i-j}x_1^j,
\]
and $f_i^h(0,x_1,x_2) = \lc(f_i,x_1) x_1^{d_i}$.  The solutions at $\infty$ lie in $\V(f_1^h,f_2^h) \subseteq \PP^1_\K \times_\K \Aff_\K^1$, so that any solution with $x_0 = 0$ must have $x_1 \ne 0$.  Hence we can assume $x_1 = 1$, and then solutions at $\infty$ are $([0\co1],\xi)$, where
\begin{equation}
\label{n2solinfty}
\lc_{x_1}(f_1)(\xi) = \lc_{x_1}(f_2)(\xi) = 0.
\end{equation}
This leads to the following corollary of Theorem \ref{elimideal} when $n = 2$:

\begin{corollary} 
\label{2varelimideal}
Let $I = \langle f_1,f_2\rangle \subseteq \K[x_1,x_2]$ be a zero-dimensional ideal such that the map $\V(I) \to \Aff_\K^1$ given by projection onto the second coordinate is injective as a map of sets.  Then any two of the following three conditions imply the third{\rm:}
\begin{enumerate}
\item $I$ has a Shape Lemma
\item $\gcd(\lc_{x_1}(f_1),\lc_{x_1}(f_2)) =1$.
\item $I \cap \K[x_2] = \langle \Rf(x_2)\rangle$
\end{enumerate}
\end{corollary}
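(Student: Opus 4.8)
The plan is to deduce this corollary directly from Theorem~\ref{elimideal} by checking that, in the case $n=2$, condition (2) of the corollary is just a restatement of condition (2) of that theorem; conditions (1) and (3) are verbatim the same in both statements.

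The one thing that needs verification is the equivalence
\[
f_1,f_2 \text{ have no solutions at } \infty \iff \gcd(\lc_{x_1}(f_1),\lc_{x_1}(f_2)) = 1,
\]
and this is already essentially done in the discussion preceding the corollary. First I would recall that, since $x_0,x_1$ are homogeneous coordinates on $\PP^1_\K$, a solution at $\infty$ must have $x_1 \ne 0$, hence may be normalized to $([0\co 1],\xi)$; and from $f_i^h(0,x_1,x_2) = \lc_{x_1}(f_i)\,x_1^{d_i}$ one reads off that $([0\co 1],\xi) \in \V(f_1^h,f_2^h)$ exactly when \eqref{n2solinfty} holds, i.e.\ when $\xi$ is a common root of $\lc_{x_1}(f_1)$ and $\lc_{x_1}(f_2)$ in $\K$. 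Since $\K$ is algebraically closed, two univariate polynomials in $\K[x_2]$ have a common root if and only if their gcd is nonconstant; equivalently, they have \emph{no} common root if and only if their gcd equals $1$. Thus $f_1,f_2$ have no solutions at $\infty$ precisely when $\gcd(\lc_{x_1}(f_1),\lc_{x_1}(f_2)) = 1$.

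With this equivalence in hand, the corollary is immediate: under the injectivity hypothesis on $\V(I) \to \Aff^1_\K$, Theorem~\ref{elimideal} says that any two of its three conditions imply the third, and we have just identified its condition (2) with condition (2) of the corollary while conditions (1) and (3) coincide. I do not anticipate any real obstacle here — the only substantive point is the normalization argument showing every solution at $\infty$ has the form $([0\co1],\xi)$, which is exactly where $n=2$ is used and which has already been carried out in the text. One could optionally remark that the hypothesis $d_i = \deg_{x_1}(f_i) \ge 1$ (implicit in the setup, so that $\lc_{x_1}(f_i)$ is a genuine leading coefficient) is needed for the displayed identity $f_i^h(0,x_1,x_2) = \lc_{x_1}(f_i)\,x_1^{d_i}$ to make sense.
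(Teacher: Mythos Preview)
Your proposal is correct and matches the paper's own proof essentially verbatim: the paper simply invokes Theorem~\ref{elimideal} together with the equivalence established by \eqref{n2solinfty}, exactly as you do. Your write-up is a bit more explicit about why the normalization $x_1=1$ is legitimate and why the gcd condition over an algebraically closed field detects common roots, but the argument is the same.
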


\begin{proof}
This follows immediately from Theorem \ref{elimideal} since the analysis of solutions at $\infty$ given in  \eqref{n2solinfty} shows that  $\gcd(\lc_{x_1}(f_1),\lc_{x_1}(f_2)) =1$ if and only if there are no solutions at $\infty$.
\end{proof}

\section{The Shape Lemma and Subresultants}
\label{mainsec}

In this final section of the paper, we highlight the role of subresultants.  The classical theory of subresultants  of two univariate polynomials goes back to the work of Jacobi \cite{Jac} and Sylvester \cite{Syl}.  Modern accounts can be found in \cite{AJ06,vGL}, both of which contain references to many other papers on subresultants.  In the multivariable case, we will follow the definition and presentation given in \cite{cha95}, where general statements and results are presented.  For our purposes it will be enough to focus on the multivariable version of the ``first subresultant polynomial''.

Let $A$ be an integral domain with field of fractions $k(A)$. Let $g_1,\ldots, g_n\in A[x_0,\ldots, x_{n-1}]$ be homogeneous polynomials of respective degrees $d_1,\ldots, d_n$, and define $\rho:= d_1+\cdots+d_n-n$ to be the \emph{critical degree} of the system.  If the multivariable resultant of these polynomials is nonzero, the degree $\rho$ piece of the graded ring $k(A)[x_0,\ldots, x_{n-1}]/\langle g_1,\ldots, g_n\rangle$ has dimension one. While these conditions are not equivalent (there are systems with nontrivial solutions that also satisfy this property), in general one expects this dimension to be equal to one. 

For any monomial $x^\alpha=x_0^{\alpha_0}\ldots x_{n-1}^{\alpha_{n-1}}$ of degree $\rho$, there exists $s_{\alpha} \in A$ that is a polynomial in the coefficients of $g_1,\ldots, g_n$ and vanishes if and only if the class of $x^\alpha$ fails to be a basis of $(k(A)[x_0,\ldots, x_{n-1}]/\langle g_1,\ldots, g_n\rangle)_\rho$.  We call $s_{\alpha}$ the \emph{scalar subresultant} associated to $x^\alpha$. 

\begin{proposition}[Theorems $1$ and $2$ in \cite{cha95}]
\label{psres}
With notation as above{\rm:}
\begin{enumerate}
\item $s_\alpha x^\beta-s_\beta x^\alpha\in\langle g_1,\ldots, g_n\rangle\subseteq A[x_0,\ldots, x_{n-1}]$ for all  $\alpha,\beta$ of degree $\rho$.
\item $s_\alpha=0$ for all $\alpha$ of degree $\rho$ if and only if 
\[
\dim\hskip.5pt (k(A)[x_0,\ldots, x_{n-1}]/\langle g_1,\ldots, g_n\rangle)_{\rho}>1.
\]
\end{enumerate}
\end{proposition}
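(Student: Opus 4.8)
The plan is to realize the scalar subresultant $s_\alpha$ explicitly as a maximal minor of a Macaulay-type matrix and then to deduce both parts by multilinear algebra: Cramer's rule for (1), and a rank count for (2). First I would set up the $A$-linear ``multiplication in degree $\rho$'' map
\[
\delta_\rho\colon\ \bigoplus_{i=1}^n A[x_0,\dots,x_{n-1}]_{\rho-d_i}\ \longrightarrow\ A[x_0,\dots,x_{n-1}]_\rho,\qquad (h_1,\dots,h_n)\longmapsto \sum_{i=1}^n h_ig_i,
\]
whose cokernel becomes $(k(A)[x_0,\dots,x_{n-1}]/\langle g_1,\dots,g_n\rangle)_\rho$ after tensoring with $k(A)$. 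Writing $\delta_\rho$ in the monomial bases produces a matrix $\mathcal M$ whose rows are indexed by the degree-$\rho$ monomials $x^\gamma$ and whose columns are indexed by the products $x^\delta g_i$. The content of the definition in \cite{cha95} is that, up to sign, $s_\alpha$ is the minor of $\mathcal M$ obtained by omitting the row indexed by $x^\alpha$ (together with, when $\mathcal M$ has more columns than that many rows, a suitable choice of columns); when $n=2$ this is the classical Sylvester/subresultant minor. I would record this determinantal description carefully, since everything rests on it.

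For part (1), the relation $s_\alpha x^\beta - s_\beta x^\alpha\in\langle g_1,\dots,g_n\rangle$ is a Cramer's-rule identity: the column vector of degree-$\rho$ monomials solves the linear system encoded by $\mathcal M$ with the $x^\delta g_i$ as right-hand sides, and computing one suitable determinant two ways — expanding along the column replaced by that monomial vector, equivalently deleting the rows $x^\alpha$ and $x^\beta$ — gives $s_\alpha x^\beta - s_\beta x^\alpha$ on one side and an explicit $A[x_0,\dots,x_{n-1}]$-combination of $g_1,\dots,g_n$ on the other. The only care needed is to fix the sign convention in the definition of $s_\alpha$ so that the alternating cofactor signs match; this is exactly the bookkeeping already visible when $n=2$ and $g_i=a_{i0}x_0^2+a_{i1}x_0x_1+a_{i2}x_1^2$, where $a_{22}g_1-a_{12}g_2 = s_{x_0^2}\,x_0x_1 - s_{x_0x_1}\,x_0^2$.

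For part (2), if $\dim_{k(A)}(k(A)[x_0,\dots,x_{n-1}]/\langle g_1,\dots,g_n\rangle)_\rho>1$ then no single class $x^\alpha$ can be a $k(A)$-basis of that space, so by its defining property every $s_\alpha$ vanishes. Conversely, that graded piece is $\mathrm{coker}(\delta_\rho\otimes k(A))$, hence has dimension $N-\mathrm{rank}(\mathcal M\otimes k(A))$, where $N$ is the number of degree-$\rho$ monomials in $n$ variables; since the family $\{s_\alpha\}$ consists, up to a common nonzero factor, of the size-$(N{-}1)$ minors of $\mathcal M$, their simultaneous vanishing is equivalent to $\mathrm{rank}(\mathcal M\otimes k(A))\le N-2$, i.e.\ to $\dim_{k(A)}(\cdots)_\rho\ge 2$. (A nonzero element of a one-dimensional space is automatically a basis, so this is the same equivalence as stated.)

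I expect the main obstacle to be the first step: pinning down the determinantal description of $s_\alpha$ when $\rho$ is large enough that $\mathcal M$ has strictly more columns than rows, so that the degree-$\rho$ part of the Koszul syzygy module is nonzero and $s_\alpha$ is not literally ``delete row $x^\alpha$ and take the determinant'' but a carefully chosen submaximal minor (or a gcd of such). Checking that the Cramer identity of (1) and the rank argument of (2) survive this bookkeeping is precisely where one must lean on the construction in \cite{cha95}; once the determinantal description is fixed, both parts are routine multilinear algebra.
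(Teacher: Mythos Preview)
The paper does not give its own proof of this proposition: it is stated with attribution to Theorems~1 and~2 of \cite{cha95} and used thereafter as a black box. There is thus nothing in the paper to compare your attempt against.

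That said, your sketch is a faithful outline of Chardin's own argument. In \cite{cha95} the $s_\alpha$ are defined as maximal minors of a Macaulay-type matrix for the map you call $\delta_\rho$, after a specific choice of columns (a partition of the degree-$\rho$ monomials among the $g_i$) that makes the relevant submatrix square of size $N-1$; Chardin shows this choice is immaterial up to sign. Part~(1) is then precisely the Cramer/cofactor identity you describe, and part~(2) is the rank--corank count. Your flagged ``main obstacle'' --- handling the excess columns coming from Koszul syzygies in degree $\rho$ --- is exactly the point where Chardin's column-selection and independence-of-choice arguments do the work; once that determinantal description is fixed, both parts are indeed routine multilinear algebra, as you say. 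One small inaccuracy: the $s_\alpha$ are not literally ``all size-$(N{-}1)$ minors up to a common factor'' but the minors of one fixed $(N{-}1)$-column submatrix, so in your rank argument for~(2) you should appeal to the fact that this particular submatrix already has full rank $N{-}1$ generically (equivalently, its minors are not identically zero), rather than to the full collection of minors of $\mathcal M$.
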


Note that  $\rho=0$ if and only if $d_1= \cdots = d_n =1$, in which case the monomial $1$ is the only one of critical degree. Its subresultant is then defined as $s_{(0,\ldots, 0)}=1$, which fulfills the conditions of Proposition \ref{psres}. In what follows, we will always assume that $\rho \ge 1$,   For readers interested in the computational aspects of multivariable subresultants, we recommend the treatment given in \cite{GV,cha94,cha95}.

In this paper, we are dealing with polynomials $f_1,\ldots, f_n\in\K[x_1,\ldots, x_n]$.  Their homogenizations with respect to $x_1,\dots,x_{n-1}$ are $f_1^h,\ldots, f_n^h\in  A[x_0,\ldots, x_{n-1}]$ for  $A=\K[x_n]$. Thus $s_\alpha=s_\alpha(x_n)\in\K[x_n]$ for all $\alpha$ of degree $\rho$.  In this situation, Proposition \ref{psres} gives the following useful result:

\begin{proposition}
\label{slm}
Assume that $I = \langle f_1,\ldots, f_n\rangle$ is zero-dimensional with $\rho \ge 1$. Given $\xi_n \in \K$, the following are equivalent{\rm:}
\begin{enumerate}
\item $\Rf(\xi_n)=0$ and $s_\alpha(\xi_n) \ne 0$
\item The fiber of $\V(f_1^h,\dots, f_n^h) \to \Aff_\K^1$ over $\xi_n$ consists of a single reduced point given by $\xi = ([\xi_0\co\dots\co\xi_{n-1}],\xi_n) \in \V(f_1^h,\dots, f_n^h)$  and $\xi^\alpha \ne 0$, where $\xi^\alpha=\xi_0^{\alpha_0}\dots\xi_{n-1}^{\alpha_{n-1}}$.
\end{enumerate}
\end{proposition}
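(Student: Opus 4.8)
The plan is to fix $\xi_n\in\K$ and pass to the specialized forms $\bar f_i^h:=f_i^h(x_0,\dots,x_{n-1},\xi_n)\in R:=\K[x_0,\dots,x_{n-1}]$, which are homogeneous of degree $d_i$. Since the resultant and each scalar subresultant $s_\alpha$ are polynomials in the coefficients of the $f_i^h$, specialization at $x_n=\xi_n$ yields $\Rf(\xi_n)=\Res_{d_1,\dots,d_n}(\bar f_1^h,\dots,\bar f_n^h)$ and $s_\alpha(\xi_n)=$ the scalar subresultant of $\bar f_1^h,\dots,\bar f_n^h$ attached to $x^\alpha$. Writing $\bar I=\langle\bar f_1^h,\dots,\bar f_n^h\rangle\subseteq R$, the fiber of $\V(f_1^h,\dots,f_n^h)\to\Aff_\K^1$ over $\xi_n$ is $\mathrm{Proj}(R/\bar I)\subseteq\PP^{n-1}_\K$, and by the universal property of the resultant (here $\K=\bar\K$) one has $\Rf(\xi_n)=0$ if and only if this fiber is nonempty. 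If $\Rf(\xi_n)\neq0$, then both (1) and (2) fail, so from now on I assume $\Rf(\xi_n)=0$ and the fiber nonempty.

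Next I would use Proposition \ref{psres}, together with the description of $s_\alpha$ preceding it: $s_\alpha(\xi_n)\neq0$ if and only if the class of $x^\alpha$ is a $\K$-basis of $(R/\bar I)_\rho$, equivalently — since $\dim_\K(R/\bar I)_\rho\geq1$ always (a generic choice of $n$ forms of degrees $d_1,\dots,d_n$ is a regular sequence with one-dimensional socle in degree $\rho$, and Hilbert functions are upper semicontinuous) — if and only if $\dim_\K(R/\bar I)_\rho=1$ and $x^\alpha\notin\bar I$. As $[x^\alpha]\neq0$ in $(R/\bar I)_\rho$ just means $x^\alpha\notin\bar I$, the proof of Proposition \ref{slm} reduces to two claims about the (nonempty) fiber: \emph{(a)} $\dim_\K(R/\bar I)_\rho=1$ if and only if the fiber is a single reduced point; and \emph{(b)} when the fiber is a single reduced point $\xi$, one has $x^\alpha\notin\bar I$ if and only if $\xi^\alpha\neq0$.

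For \emph{(a)}, ``only if'': Macaulay's bound on the growth of Hilbert functions forces $\dim_\K(R/\bar I)_d\leq1$ for all $d\geq\rho$, while nonemptiness of $\mathrm{Proj}(R/\bar I)$ makes its Hilbert polynomial eventually positive; hence that polynomial is the constant $1$, so the fiber is zero-dimensional of degree $1$, i.e.\ a single reduced point. For ``if'': a single reduced point $\xi$ makes $\V(\bar f_1^h,\dots,\bar f_n^h)\subseteq\PP^{n-1}_\K$ finite with $\bar I^{\mathrm{sat}}$ the linear prime of $\xi$, so $R/\bar I^{\mathrm{sat}}$ has constant Hilbert function $1$; the point to justify is that this stable value is already reached in degree $\rho$, i.e.\ $(\bar I^{\mathrm{sat}}/\bar I)_\rho=0$. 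This is the regularity statement that an ideal generated by $n$ forms of degrees $d_1,\dots,d_n$ in $n$ variables whose zero locus in $\PP^{n-1}$ is finite and nonempty satisfies $\dim_\K(R/\bar I)_d=\deg\V(\bar I)$ for all $d\geq\rho$; it can be extracted from the Koszul complex of $\bar f_1^h,\dots,\bar f_n^h$, or quoted from the circle of results used for Theorem \ref{affResm}. For \emph{(b)}, when the fiber is a single reduced point $\xi$, the quotient map $(R/\bar I)_\rho\to(R/\bar I^{\mathrm{sat}})_\rho$ followed by evaluation at a chosen representative of $\xi$ is a nonzero map onto $\K$, hence — both spaces being one-dimensional — an isomorphism carrying $[x^\alpha]$ to $\xi^\alpha$; so $x^\alpha\notin\bar I$ if and only if $\xi^\alpha\neq0$. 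Assembling \emph{(a)} and \emph{(b)} with the previous paragraph gives (1)\,$\Leftrightarrow$\,(2).

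I expect the genuinely nontrivial step to be the regularity fact in \emph{(a)}: controlling the degree-$\rho$ graded piece of $R/\bar I$ exactly when $\bar f_1^h,\dots,\bar f_n^h$ fail to be a regular sequence, which is precisely the situation of a nonempty fiber. The remaining ingredients — the specialization identities, the reduction to $\Rf(\xi_n)=0$, Macaulay's growth bound, and the evaluation isomorphism — are routine.
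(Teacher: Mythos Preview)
Your proof is correct and follows essentially the same route as the paper's: specialize at $x_n=\xi_n$, use Proposition~\ref{psres} to identify $s_\alpha(\xi_n)\neq0$ with $\dim_\K(R/\bar I)_\rho=1$ and $[x^\alpha]\neq0$, and then link this to the geometry of the fiber. The paper cites \cite[Corollaire~2]{cha94} for precisely the regularity fact you flagged in (a) ``if'', and for (b) uses the syzygy $s_\alpha x^\beta-s_\beta x^\alpha\in\bar I$ from Proposition~\ref{psres}(1) (choosing $\beta$ with $\xi^\beta\neq0$) in place of your evaluation isomorphism, but these are cosmetic differences.
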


\begin{proof} (1) $\Rightarrow$ (2): By the universal property of resultants, $\Rf(\xi_n)=0$ implies that the fiber has 
at least one point $\xi$, and since $s_\alpha(\xi_n)\ne0$, (2) of Proposition \ref{psres} implies that the Hilbert function of the fiber above the specialized system in the critical degree coincides with the one of a complete intersection of a single point.  It follows that the fiber consists of a single reduced point.  To show that $\xi^\alpha\ne0$, pick $x^\beta$ of degree $\rho$ such that $\xi^\beta \ne 0$.  If $\beta=\alpha$ we are done. Otherwise, by (1) of Proposition \ref{psres}, we have $s_\alpha(\xi_n) \xi^\beta = s_\beta(\xi_n) \xi^\alpha$.  Since $s_\alpha(\xi_n)\neq0\neq \xi^\beta,$ the same is true for $\xi^\alpha$.

(2) $\Rightarrow$ (1): If the fiber over $\xi_n$ is nonempty, then $\Rf(\xi_n)$ must be zero. In addition, if it is a single reduced point, by \cite[Corollaire 2]{cha94}, the dimension of the $\rho$-th degree part of $\K[x_0,\ldots, x_{n-1}]/\langle f_1^h(x,\xi_n),\dots, f_n^h(x, \xi_n)\rangle$ must be equal to one. As $\xi^\alpha\neq0,$  the monomial $x_0^{\alpha_0}\ldots x_{n-1}^{\alpha_{n-1}}$ is a basis of this $\K$-vector space, which implies that  $s_\alpha(\xi_n)\neq0$ because of the definition of $s_\alpha$ given above.
\end{proof} 

For our purposes, certain scalar subresultants are especially useful.  Suppose that $\rho\geq1$ and consider the  monomials $x^{\alpha(i)} = x_0^{\rho-1}x_i$, $i = 0,\dots, n-1$, of degree $\rho$. For simplicity,  the scalar subresultant $s_{\alpha(i)}(x_n)$ will be denoted $s_i(x_n)$ in what follows.  Proposition \ref{psres} implies  that the polynomials $s_0(x_n) x_0^{\rho-1}x_i-s_i(x_n)x_0^\rho$  belong to the ideal $\langle f_1^h,\ldots, f_n^h\rangle\subset\K[x_0,\ldots,x_{n-1}]$ for $i=1,\ldots, n-1$.  Setting $x_0=1$ gives the following polynomials:
\begin{equation}\label{pi}
p_i(x_i,x_n):=s_0(x_n)\,x_i-s_i(x_n)\in\langle f_1,\ldots, f_n\rangle\cap\K[x_i, x_{n}], \quad i =  1,\dots, n-1.
\end{equation}
These are the \emph{first subresultant polynomials} of $f_1,\dots,f_n$.  

The main theorems of this section involve the ideal
\begin{equation}
\label{RSideal}
\langle \Rf(x_n),p_1(x_1,x_n),\ldots, p_{n-1}(x_{n-1},x_n)\rangle \subseteq I
\end{equation}
generated by the resultant and first subresultant polynomials.  But before giving the proofs, we need the following general lemma about the inclusion \eqref{RSideal}:

\begin{lemma}
\label{RShapeLem}
Given $d(x_n),d_0(x_n),\dots,d_{n-1}(x_n) \in \K[x_n]$ with $d(x_n) \ne 0$, set
\begin{equation}
\label{Jrep}
J := \langle d(x_n),d_0(x_n) x_1 - d_1(x_n), \dots, d_0(x_n) x_{n-1} - d_{n-1}(x_n)\rangle.
\end{equation}
Then $J$ is zero-dimensional if and only if $\gcd(d(x_n),d_0(x_n),\dots,d_{n-1}(x_n)) = 1$.  
Furthermore, if these conditions are satisfied and $J \subseteq I \subseteq \K[x_1,\dots,x_n]$, then{\rm:}
\begin{enumerate}
\item $I$ has a Shape Lemma. 
\item If the generator of $I \cap \K[x_n]$ lies in $J$, then $I = J$.  
\end{enumerate}
\end{lemma}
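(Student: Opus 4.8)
The plan is to treat the dimension criterion first, then part (1), and finally part (2), which will use part (1).

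\emph{The dimension criterion.} I would argue geometrically. If $g := \gcd(d(x_n), d_0(x_n), \dots, d_{n-1}(x_n))$ is nonconstant it has a root $\lambda \in \K$, and then $d(x_n)$ and every $d_0(x_n) x_i - d_i(x_n)$ vanish identically on the hyperplane $\{x_n = \lambda\} \cong \Aff^{n-1}_\K$; since $n \ge 2$, this makes $\V(J)$ positive-dimensional, so $J$ is not zero-dimensional. Conversely, if $g = 1$, I would show that over each of the finitely many roots $\lambda$ of $d(x_n)$ there is at most one point of $\V(J)$: if $d_0(\lambda) \ne 0$ the equations $d_0(\lambda) x_i = d_i(\lambda)$ force $x_1, \dots, x_{n-1}$; if $d_0(\lambda) = 0$, then some $d_j(\lambda) \ne 0$ (else $\lambda$ would be a common root of $d, d_0, \dots, d_{n-1}$), so the generator $d_0(\lambda) x_j - d_j(\lambda) = -d_j(\lambda)$ is a nonzero constant and no point lies above $\lambda$. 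Hence $\V(J)$ is finite and $J$ is zero-dimensional.

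\emph{Part \textup{(1)}.} Since $J \subseteq I$ and $J$ is zero-dimensional, $I$ is zero-dimensional too, and I would apply Lemma \ref{shapefiber}: it suffices to check that for every closed point $\fp = \langle x_n - \lambda\rangle \subseteq \Aff^1_\K$ the fiber $V_\fp$ of $\V(I) \to \Aff^1_\K$ is empty or a single reduced point. Since $I + \langle x_n - \lambda\rangle \supseteq J + \langle x_n - \lambda\rangle$, it is enough to analyze the latter: substituting $x_n = \lambda$ into the generators of $J$ and reusing the case analysis above, $J + \langle x_n - \lambda\rangle$ is the unit ideal when $d(\lambda) \ne 0$, or when $d(\lambda) = d_0(\lambda) = 0$ and some $d_j(\lambda) \ne 0$; and it is the maximal ideal $\langle x_1 - c_1, \dots, x_{n-1} - c_{n-1}, x_n - \lambda\rangle$ with $c_i = d_i(\lambda)/d_0(\lambda)$ when $d(\lambda) = 0 \ne d_0(\lambda)$. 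An ideal containing a maximal ideal equals it or the unit ideal, so $\K[x_1,\dots,x_n]/(I + \langle x_n - \lambda\rangle)$ is $0$ or $\K$ in every case; thus $V_\fp$ is empty or a single reduced point, and Lemma \ref{shapefiber} gives the Shape Lemma.

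\emph{Part \textup{(2)}.} By part (1), write $I = \langle r(x_n), x_1 - g_1(x_n), \dots, x_{n-1} - g_{n-1}(x_n)\rangle$ with $I \cap \K[x_n] = \langle r(x_n)\rangle$, and assume $r(x_n) \in J$; since $J \subseteq I$ we only need $I \subseteq J$. The key step is that $\gcd(r(x_n), d_0(x_n)) = 1$: a common root $\lambda$ yields $\xi = (g_1(\lambda), \dots, g_{n-1}(\lambda), \lambda) \in \V(I) \subseteq \V(J)$, so $d_0(\lambda) g_i(\lambda) - d_i(\lambda) = 0$ forces $d_i(\lambda) = 0$ for all $i$ (as $d_0(\lambda) = 0$), while $d(\lambda) = 0$ since $d(x_n) \in J$ and $\xi \in \V(J)$ — contradicting $\gcd(d, d_0, \dots, d_{n-1}) = 1$. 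Fix a Bézout identity $a(x_n) d_0(x_n) + b(x_n) r(x_n) = 1$. Multiplying by $x_i$ and reducing modulo $J$, where $r(x_n) \equiv 0$ and $d_0(x_n) x_i \equiv d_i(x_n)$, gives $x_i \equiv a(x_n) d_i(x_n) \pmod{J}$, so $x_i - a(x_n) d_i(x_n) \in J$. Subtracting $x_i - g_i(x_n) \in I$ shows $g_i(x_n) - a(x_n) d_i(x_n) \in I \cap \K[x_n] = \langle r(x_n)\rangle \subseteq J$, whence $x_i - g_i(x_n) \in J$ for $i = 1, \dots, n-1$; together with $r(x_n) \in J$ this gives $I \subseteq J$, so $I = J$.

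The step I expect to be the main obstacle is the bookkeeping in the case analysis over a root $\lambda$ of $d(x_n)$ — especially handling the degenerate branch $d_0(\lambda) = 0$ uniformly, so that one argument serves both the finiteness of $\V(J)$ and the fiber computation in part (1), and checking carefully that passing from $I$ to the smaller ideal $J$ really does control the fibers of $\V(I)$.
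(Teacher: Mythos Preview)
Your argument is correct, but it follows a genuinely different route from the paper's. For part~(1), the paper does not invoke Lemma~\ref{shapefiber}; instead it works directly on $J$, choosing among all representations \eqref{Jrep} one with $\deg d(x_n)$ minimal and then using a B\'ezout identity for $\gcd(d,d_0)$ to rewrite $J$ in the simpler form $J = \langle e(x_n), x_1 - e_1(x_n),\dots,x_{n-1}-e_{n-1}(x_n)\rangle$. In other words, the paper proves that $J$ itself has a Shape Lemma, and then the Shape Lemma for any $I\supseteq J$ follows by noting that $\{r(x_n),\,x_i-e_i(x_n)\}$ is a lex Gr\"obner basis of $I$. This makes part~(2) a one-liner: if $r(x_n)\in J$, then the entire Gr\"obner basis of $I$ lies in $J$.

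Your approach trades effort between the two parts. Using Lemma~\ref{shapefiber} makes (1) short and conceptual, but since you never exhibit Shape Lemma generators of $J$, you must do real work in (2): your key observation that $\gcd(r(x_n),d_0(x_n))=1$ (forced by the gcd hypothesis on $d,d_0,\dots,d_{n-1}$) followed by the B\'ezout computation $x_i\equiv a(x_n)d_i(x_n)\pmod J$ is a nice substitute. What the paper's approach buys, beyond brevity in (2), is an explicit algorithm (noted in Remark~\ref{RShapeLemRem}) for producing the Shape Lemma basis of $J$; what yours buys is a cleaner proof of (1) that directly exploits the fiber criterion already established.
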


\begin{proof}
First assume that $J$ is zero-dimensional.  Any nontrivial common divisor $h(x_n)$ of $d(x_n), d_0(x_n),\dots,d_{n-1}(x_n)$ divides $p_i(x_i,x_n)$ for $i=1,\dots,n-1$, so that $J \subseteq \langle h(x_n)\rangle$, which is impossible when $J$ is zero-dimensional.  Conversely, assume that the gcd condition is satisfied.  It suffices to prove  that $\V(J)$ is finite.  Take $\xi = (\xi_1,\dots,\xi_n) \in \V(J)$.  Then $d(\xi_n) = 0$, and since $d(x_n)$ is nonzero, there are only finitely many choices of $\xi_n$.  If $d_0(\xi_n) = 0$, then for $i = 1,\dots,n-1$, we have
\[
0 = d_0(\xi_n) \xi_i - d_i(\xi_n) = 0 \cdot \xi_i - d_i(\xi_n) = - d_i(\xi_n),
\]
which shows that $x_n-\xi_n$ is a common divisor of $d(x_n),d_0(x_n),\dots,d_{n-1}(x_n)$, a contradiction.  Thus $d_0(\xi_n) \ne 0$, and then $\xi_i = d_i(\xi_n)/d_0(\xi_n)$ for  $i = 1,\dots,n-1$ shows that $\xi$ is uniquely determined by $\xi_n$.  Hence $\V(J)$ is finite.

For (1), consider all representations \eqref{Jrep} of $J$ that satisfy the gcd condition $\gcd(d(x_n), d_0(x_n),\dots,d_{n-1}(x_n)) = 1$, and pick one where $\deg(d(x_n))$ is minimal. Let  $e_0(x_n) := \gcd(d(x_n), d_0(x_n))$.  Then we can write $d(x_n) = e_0(x_n) e(x_n)$ and $d_0(x_n) = e_0(x_n) E(x_n)$, and we have a B\'ezout identity
\begin{equation}
\label{beez} 
A(x_n)\,e(x_n)+B(x_n)\,E(x_n)=1.
\end{equation}
Also define $e_i(x_n) = B(x_n) d_i(x_n)$ for $ i=1,\ldots, n-1$.   We claim that 
\begin{equation}
\label{gcde}
\gcd(e(x_n),e_0(x_n),\dots,e_{n-1}(x_n)) = 1.
\end{equation}
If not, there is $\xi_n \in \K$ that makes them all vanish.  Then $e(\xi_n) = 0$ and \eqref{beez} imply that $B(\xi_n) \ne 0$.  But $e_0(x_n) = \gcd(d(x_n), d_0(x_n))$ implies 
\begin{equation}
\label{gcded}
\gcd(e_0(x_n),d_1(x_n),\dots,d_{n-1}(x_n)) = \gcd(d(x_n),d_0(x_n),\dots,d_{n-1}(x_n)) = 1.
\end{equation}
Since $e_0(\xi_n) =0$, we must have  $d_i(\xi_n) \ne 0$ for some $1 \le i \le n-1$, and then $e_i(\xi_n) = B(\xi_n) d_i(\xi_n) \ne 0$, a contradiction.  This proves \eqref{gcde}.  

We next claim that 
\begin{equation}
\label{JJp}
J = \langle e(x_n),e_0(x_n) x_1 - e_1(x_n), \dots, e_0(x_n) x_{n-1} - e_{n-1}(x_n)\rangle.
\end{equation}
In what follows, we will omit ``$(x_n)$'' for simplicity.  To prove \eqref{JJp}, let $J'$ denote the ideal on the right.  For the inclusion $J' \subseteq J$, first note that \eqref{beez} implies
\[
e_0\hskip1pt x_i - e_i = e_0(A\,e+B\,E)x_i - B d_i= Ax_i \cdot d + B\cdot(d_0x_i -  d_i) \in J
\]
since $d = e_0 e$, $d_0 = e_0 E$, and $e_i = Bd_i$.  Showing that $e \in J$ will take more work.  First observe that 
\[
d x_i - e(e_0 x_i - e_i) = e e_i, \ i = 1,\dots,n-1. 
\]
The left-hand side lies in $J$ by what we just proved, so $e e_i = e B d_i \in J$.  Then 
\begin{equation}
\label{Aedi}
d_0 x_i - d_i - E(e_0 x_i - e_i) = -d_i+EBd_i = (-1+EB)d_i = - Aed_i
\end{equation}
proves that $Aed_i \in J$.  We showed above that $Bed_i \in J$, and since $A$ and $B$ are relatively prime by \eqref{beez}, we get $ed_i \in J$ for $i = 1,\dots,n-1$.   However, we also have $ee_0 \in J$.  Since $e_0,d_1,\dots,d_{n-1}$ are relatively prime by \eqref{gcded}, it follows that $e \in J$, completing the proof of $J' \subseteq J$.  

 For $J \subseteq J'$, note that $d = e_0 e \in J'$.  Also, for $i=1,\ldots, n-1$, $J'$ contains $e$ and $e_0x_i-e_i$, so that $d_0 x_i - d_i \in J'$ by  \eqref{Aedi}.  Thus $J \subseteq J'$, and \eqref{JJp} is proved.

When we combine \eqref{gcded} and \eqref{JJp} with the minimality of $\deg(d(x_n))$ in \eqref{Jrep}, we see that $\deg(e(x_n)) \ge \deg(d(x_n))$.  Since $d(x_n) = e_0(x_n) e(x_n)$, it follows that $e_0(x_n) = \gcd(d(x_n),d_0(x_n))$ is constant, i.e., $e_0(x_n) = 1$.  Then \eqref{JJp} becomes
\begin{equation}
\label{simpleJ}
J =  \langle e(x_n), x_1 - e_1(x_n), \dots, x_{n-1} - e_{n-1}(x_n)\rangle.
\end{equation}
so that $x_i - e_i(x_n) \in J$ for $i = 1,\dots,n-1$. 

Now let $I$ be an ideal containing $J$ as in the statement of the lemma.  Then $I$ is also zero-dimensional, so that $I \cap \K[x_n] = \langle r(x_n) \rangle$ for some nonzero $r(x_n) \in \K[x_n]$.   It is straighforward to show that $G = \{r(x_n),\, x_1-e_1(x_n),\ldots, x_{n-1}-e_{n-1}(x_n)\}$ is a Gr\"obner basis of $I$ for lex order with $x_n\prec x_{n-1}\prec\ldots\prec x_1$.   Since a Gr\"obner basis is a basis, it follows that $I$ has a Shape Lemma, proving (1).

For (2), let $I \cap \K[x_n] = \langle r(x_n) \rangle$ with $r(x_n)$ monic.  The hypothesis of (2) implies $r(x_n) \in J$, so that the Gr\"obner basis of $I$ constructed in the previous paragraph lies in $J$.  The equality $I = J$ follows immediately.
\end{proof}

\begin{remark}
\label{RShapeLemRem}
Implicit in the proof of Lemma \ref{RShapeLem} is an algorithm that constructs a Shape Lemma basis of the ideal \eqref{Jrep} when $\gcd(d(x_n),d_0(x_n),\dots,d_{n-1}(x_n)) = 1$.  Note also that the algorithm takes only one step when $\gcd(d(x_n),d_0(x_n)) = 1$.
\end{remark}

Lemma \ref{RShapeLem} gives a nice consequence of \eqref{RSideal}:

\begin{corollary}
\label{Lem51cor}
 $I$ has a Shape Lemma when $\gcd( \Rf(x_n), s_0(x_n),\dots,s_{n-1}(x_n)) = 1$.
\end{corollary}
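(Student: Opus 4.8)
The plan is to read this off directly from Lemma~\ref{RShapeLem}. The obvious move is to instantiate \eqref{Jrep} with $d(x_n) = \Rf(x_n)$, $d_0(x_n) = s_0(x_n)$, and $d_i(x_n) = s_i(x_n)$ for $i = 1,\dots,n-1$, so that $J$ becomes the ideal $\langle \Rf(x_n),p_1(x_1,x_n),\dots,p_{n-1}(x_{n-1},x_n)\rangle$ of \eqref{RSideal}, which is contained in $I$. The gcd hypothesis of the corollary is precisely the hypothesis $\gcd(d,d_0,\dots,d_{n-1}) = 1$ of the lemma, so part~(1) of Lemma~\ref{RShapeLem} would finish the proof. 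The one point that needs attention is that Lemma~\ref{RShapeLem} requires $d(x_n)\ne 0$; this holds whenever $\Rf(x_n)\ne 0$, but since $\Rf(x_n)$ can in principle vanish identically, I would phrase the argument so as not to rely on this.

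To do so, let $I\cap\K[x_n] = \langle r(x_n)\rangle$ with $r(x_n)$ monic; since $I$ is zero-dimensional, $r(x_n)\ne 0$. First I would check that the gcd hypothesis passes from $\Rf(x_n)$ to $r(x_n)$. As $\Rf(x_n)\in I\cap\K[x_n] = \langle r(x_n)\rangle$, we have $r(x_n)\mid\Rf(x_n)$, and therefore
\[
\langle r(x_n)\rangle + \langle s_0(x_n),\dots,s_{n-1}(x_n)\rangle \ \supseteq\ \langle \Rf(x_n)\rangle + \langle s_0(x_n),\dots,s_{n-1}(x_n)\rangle = \K[x_n],
\]
where the last equality holds because $\gcd(\Rf(x_n),s_0(x_n),\dots,s_{n-1}(x_n)) = 1$ in the principal ideal domain $\K[x_n]$. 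Hence $\gcd(r(x_n),s_0(x_n),\dots,s_{n-1}(x_n)) = 1$ as well.

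Now I would apply Lemma~\ref{RShapeLem} with $d = r$, $d_0 = s_0$, and $d_i = s_i$. The resulting ideal is $J = \langle r(x_n),p_1(x_1,x_n),\dots,p_{n-1}(x_{n-1},x_n)\rangle$, and $J\subseteq I$ because $r(x_n)\in I$ while each $p_i(x_i,x_n) = s_0(x_n)\,x_i - s_i(x_n)$ lies in $I$ by \eqref{pi}. Since $d = r\ne 0$ and the gcd condition holds, the lemma says that $J$ is zero-dimensional and that $I$ has a Shape Lemma, which is the assertion. I do not anticipate a genuine obstacle here: the entire content sits in Lemma~\ref{RShapeLem}, and the only subtlety is the harmless replacement of $\Rf(x_n)$ by $r(x_n)$ to secure a nonzero first generator.
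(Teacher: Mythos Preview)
Your proof is correct and follows the same route as the paper: apply Lemma~\ref{RShapeLem} to the inclusion \eqref{RSideal} and read off part~(1). The only difference is your extra care in replacing $\Rf(x_n)$ by $r(x_n)$ to guarantee a nonzero first generator; the paper simply takes $d=\Rf$ without comment, so your version is slightly more scrupulous but not a different argument.
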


\begin{proof}
Since $p_i(x_i,x_n) = s_0(x_n)x_i - s_i(x_n)$, \eqref{RSideal} can be written
\[
\langle \Rf(x_n), s_0(x_n)x_1 - s_1(x_n),\ldots,  s_0(x_n)x_{n-1} - s_{n-1}(x_n)\rangle \subseteq I.
\]
Then our gcd hypothesis and Lemma \ref{RShapeLem} imply that $I$ has a Shape Lemma.
\end{proof}

The first main result of Section \ref{mainsec} is Theorem \ref{slm2}, which says that if $I = \langle f_1,\dots,f_n\rangle$ is zero-dimensional and $\rho \ge 1$, then the conditions
\begin{enumerate}
\item $I$ has a Shape Lemma and $f_1,\dots,f_n$ have no solutions at $\infty$.
\item $I\cap\K[x_n]=\langle \Rf(x_n)\rangle$ and $\gcd(\Rf(x_n), s_0(x_n)) = 1$.
\item $I = \langle \Rf(x_n), p_1(x_1,x_n),\ldots, p_{n-1}(x_{n-1},x_n)\rangle$ and $I\cap\K[x_n]=\langle \Rf(x_n)\rangle$.
\end{enumerate}
are equivalent, and when these conditions are all true, we also have
\begin{equation*}
I \cap \K[x_{i_1},\dots,x_{i_\ell},x_n] = \langle \Rf(x_n), p_{i_0}(x_{i_0},x_n),\dots,p_{i_\ell}(x_{i_\ell},x_n)\rangle
\end{equation*}
 whenever $1 \le i_1 < \cdots < i_\ell < n$.

\begin{proof}[Proof of Theorem \ref{slm2}]
(1) $\Rightarrow$ (2): Assume that $I$ has a Shape Lemma with no solutions at $\infty$.
The implication (1) \& (2) $\Rightarrow$ (3) of Theorem \ref{elimideal} implies that $I\cap\K[x_n]=\langle \Rf(x_n)\rangle$ (the injectivity hypothesis of Theorem \ref{elimideal} is satisfied since $I$ has a Shape Lemma).  

Now suppose that $\xi_n$ is a root of $\Rf(x_n)$.  Since there are no solutions at $\infty$, it follows that there is a solution $\xi = ([1\co\xi_1\co\dots\co\xi_{n-1}],\xi_n)$.  Since $I$ has a Shape Lemma, the fiber over $\xi_n$ consists of a single smooth point by Lemma \ref{shapefiber}.  For $x^{\alpha(0)} = x_0^\rho$, we clearly have $\xi^{\alpha(0)} \ne 0$, so that $s_{\alpha(0)}(\xi_n) \ne 0$ by Proposition \ref{slm}.  But by definiton, $s_0(\xi_n) = s_{\alpha(0)}(x_n)$.   Thus $s_0(\xi_n) \ne 0$ whenever $\Rf(\xi_n) = 0$, which proves that $\gcd(\Rf(x_n), s_0(x_n)) = 1$.

(2) $\Rightarrow$ (3): Assume $I\cap\K[x_n]=\langle \Rf(x_n)\rangle$ and $\gcd(\Rf(x_n), s_0(x_n)) = 1$.   Then $J = \langle \Rf(x_n),p_1(x_1,x_n),\dots,p_{n-1}(x_{n-1},x_n)\rangle$ satisfies the  the gcd condition of Lemma~\ref{RShapeLem}.  Since $J \subset I$ and the generator of $I\cap\K[x_n]$ lies in $J$, 
 the lemma implies $I = J$. 

(3) $\Rightarrow$ (1): Here, we assume that $I = \langle \Rf(x_n), p_1(x_1,x_n),\ldots, p_{n-1}(x_{n-1},x_n)\rangle$ and $I\cap\K[x_n]=\langle \Rf(x_n)\rangle$.  Since $I$ is zero-dimensional, Lemma \ref{RShapeLem} implies that $I$ has a Shape Lemma.

Since $I$ has a Shape Lemma, Lemma \ref{shapesings} implies that the injectivity hypothesis of Theorem \ref{elimideal} is satisfied.  Then this theorem and $I\cap\K[x_n]=\langle \Rf(x_n)\rangle$ imply  that there are no solutions at $\infty$, completing the proof that (3) $\Rightarrow$ (1).  

Finally, suppose that the conditions of Theorem \ref{slm2} are all true.  Then $\Rf(x_n)$ generates $I \cap \K[x_n]$ and is relatively prime with $s_0(x_n)$.  Fix  $1 \le i_1 < \cdots < i_\ell < n$ and observe that $\langle \Rf(x_n), p_{i_0}(x_{i_0},x_n),\dots,p_{i_\ell}(x_{i_\ell},x_n)\rangle$ can be written as
\begin{equation}
\label{RSlong}
\langle \Rf(x_n), s_{0}(x_n)x_{i_0}- s_{i_0}(x_n),\dots,s_o(x_n)x_{i_\ell} - s_{i_\ell}(x_n)\rangle 
\end{equation}
This lies in $I \cap  \K[x_{i_1},\dots,x_{i_\ell},x_n]$.  Since $\gcd(\Rf(x_n),s_{0}(x_n)) = 1$, we have
\[
\gcd(\Rf(x_n),s_{0}(x_n), s_{i_1}(x_n),\dots,s_{i_\ell}(x_n)) = 1,
\]
and the generator of $(I \cap \K[x_{i_1},\dots,x_{i_\ell},x_n])\cap \K[x_n] = I \cap \K[x_n] = \langle \Rf(x_n)\rangle$ is in \eqref{RSlong}.  Then (2) of Lemma \ref{RShapeLem} gives the desired equality
\[
I\cap \K[x_{i_1},\dots,x_{i_\ell},x_n] = \langle \Rf(x_n), p_{i_0}(x_{i_0},x_n),\dots,p_{i_\ell}(x_{i_\ell},x_n)\rangle.\qedhere
\]
\end{proof}

\begin{example}
Let $f_1 = 1 + x_1x_3$, $f_2 = 1 + x_2x_3$, and $f_3 = 1+  x_1^2x_3 +  x_2^2x_3 + x_3^2$ in $\C[x_1,x_2,x_3]$.  One checks that $f_i^h(0,x_1,x_2,0)=0$, so there are solutions at $\infty$ (in fact, infinitely many). We compute a Gr\"obner basis for $I$ to get a Shape Lemma
\[
I = \langle f_1,f_2,f_3\rangle = \langle 2 + x_3 + x_3^3,\, 
x_1 -\tfrac12 - \tfrac12 x_3^2,\,
x_2 -\tfrac12  - \tfrac12 x_3^2 \rangle.
\]
On the other hand, $\rho=1$, and  computing resultants and subresultants gives
\begin{align*}
\Rf(x_3) &= x_3^3(2+x_3+x_3^3)\\
s_0(x_3) &=s_{(1,0,0)}(x_3)= x_3^2\\ 
s_1(x_3) &=s_{(0,1,0)}(x_3) = -x_3\\
s_2(x_3) &=s_{(0,0,1)}(x_3) = -x_3,
\end{align*}
so that $p_1(x_1,x_3)=x_3 f_1$ and $p_2(x_2,x_3)=x_3f_2.$ In this case, $x_3$ is a factor of all the scalar subresultants.  
It is fun to see how all three conditions of Theorem 1.3 fail in this case.
\end{example}

The second main result of this section is Theorem \ref{RShape}, which assumes only that the ideal is generated by the resultant and first subresultant polynomials.  More precisely, for a zero-dimensional ideal $I = \langle f_1,\dots,f_n\rangle$ with $\rho \ge 1$, Theorem \ref{RShape} asserts that if $I = \langle \Rf(x_n),p_1(x_1,x_n),\dots,p_{n-1}(x_{n-1},x_n)\rangle$, then 
\begin{enumerate}
\item $\gcd(\Rf(x_n), s_0(x_n),\dots,s_{n-1}(x_n)) = 1$.
\item $I$ has a Shape Lemma.
\end{enumerate}
In addition, 
the following  are equivalent:
\begin{enumerate}
\item[{\rm(3)}] $I\cap\K[x_n]= \langle \Rf(x_n)\rangle$ 
\item[{\rm(4)}] $\gcd(\Rf(x_n), s_0(x_n)) = 1$, where  $p_i(x_i,x_n) = s_0(x_n)x_i - s_i(x_n)$. 
\item[{\rm(5)}] $f_1,\dots,f_n$ have  no solutions at $\infty$.
\end{enumerate}


\begin{proof}[Proof of Theorem \ref{RShape}]
As in the proof of Corollary \ref{Lem51cor}, $I$ can be written
\begin{equation}
\label{IRs}
I = \langle \Rf(x_n), s_0(x_n)x_1 - s_1(x_n),\ldots,  s_0(x_n)x_{n-1} - s_{n-1}(x_n)\rangle.
\end{equation}
Then  Lemma \ref{RShapeLem} implies that $\gcd(\Rf(x_n), s_0(x_n),\dots,s_{n-1}(x_n)) = 1$ since $I$ has dimension zero.  This proves (1), and then (2) follows from Corollary  \ref{Lem51cor}.
  
It remains to show that our hypothesis on $I$ implies the equivalence of (3), (4), and (5).  Given Theorem \ref{slm2}, this is easy:
\begin{itemize}
\item (3) $\Rightarrow$ (4) follows from (3) $\Rightarrow$ (2) of Theorem \ref{slm2}.  
\item (4) $\Rightarrow$ (3) follows since the B\'ezout identity for $\gcd(\Rf(x_n), s_0(x_n)) = 1$ and \eqref{IRs}  imply that $I =  \langle \Rf(x_n), x_1 - g_1(x_n),\ldots,  x_{n-1} - g_{n-1}(x_n)\rangle$ for some $g_1(x_n),\dots,g_{n-1}(x_n) \in \K[x_n]$.  This easily implies (3).
\item (3) $\Rightarrow$ (5) follows from (3) $\Rightarrow$ (1) of Theorem \ref{slm2}.  
\item (5) $\Rightarrow$ (3) follows from (1) $\Rightarrow$ (2) of Theorem~\ref{slm2} since $I$ has a Shape Lemma.\qedhere
\end{itemize}
\end{proof}


We conclude this section with a discussion of the case $n=2$.  In Section \ref{elimsec}, we noted that $f_1,f_2$ have no solution at $\infty$ if and only if $\gcd(\lc_{x_1}(f_1),\lc_{x_1}(f_2)) = 1$.  In particular, this allows us to replace the condition ``no solution at $\infty$'' with the easier-to-check condition ``$\gcd(\lc_{x_1}(f_1),\lc_{x_1}(f_2)) = 1$'' in 
Theorems~\ref{slm2} and \ref{RShape}.

\begin{example}
\label{sec5ex2}
Let $f_1 = x_2 x_1^2 + x_1 + x_2^2+x_2$ and $f_2 = x_2 x_1 + 1$ in $\C[x_1,x_2]$.  Since $\lc_{x_1}(f_1) = \lc_{x_1}(f_2) = x_2$, there is a solution at $\infty$.  Hence (1) in Theorem \ref{slm2} is false, so that (2) and (3) are also false.  

It is instructive to see exactly how (1), (2) and (3) fail in this case.  A Gr\"obner basis calculation shows that $I = \langle f_1,f_2\rangle = \langle x_2+1,x_1-1\rangle$ has a Shape Lemma, so one part of (1) is true while the other part is false.  For (2), we have
\[
\Rf(x_2) = x_2^3(x_2+1) \ \text{ and } \ p_1(x_1,x_2) =  x_2 x_1 + 1,
\]
so that $s_0(x_2) = x_2$.  Hence $\gcd(\Rf(x_2), s_0(x_2)) = x_2 \ne 1$, and 
\[
I\cap\C[x_2]= \langle x_2+1,x_1-1\rangle\cap\C[x_2] = \langle x_2+1\rangle \ne \langle \Rf(x_2)\rangle
\]
because $\Rf(x_2) = x_2^3(x_2+1)$ has an extraneous factor of $x_2^3$.  It follows that both parts of (2) are false.  As for (3), a Gr\"obner basis calculation reveals that 
\[
\langle \Rf(x_2),p_1(x_1,x_2) \rangle = \langle x_2^3(x_2+1), x_2 x_1 + 1\rangle =  \langle x_2+1,x_1-1\rangle.
\]
Hence $I = \langle \Rf(x_2),p_1(x_1,x_2) \rangle$ is true in this case.   Thus one part of (3) is true and the other part is false.  Theorem~\ref{RShape} explains what is going on:
 \begin{itemize}
\item $I = \langle \Rf(x_2),p_1(x_1,x_2) \rangle$ implies that $I$ has a Shape Lemma.  This accounts for the parts above that are true.
\item $I = \langle \Rf(x_2),p_1(x_1,x_2) \rangle$ implies that (3), (4), and (5) of Theorem~\ref{RShape} are equivalent.  So $I\cap\C[x_2] \ne \langle \Rf(x_2)\rangle$ guarantees that there are solutions at $\infty$ and that $\gcd(\Rf(x_2), s_0(x_2)) \ne 1$.  This explains the parts above that are false.
\end{itemize}
\end{example}


Our final result shows that when $n = 2$, sometimes just knowing the resultant is enough to guarantee that the conditions of Theorem~\ref{slm2} are all true:

\begin{proposition}
\label{sec5prop}
If $f_1,f_2 \in \K[x_1,x_2]$ satisfy $\rho \ge 1$ and the resultant $\Rf(x_2)$ has degree $\deg(f_1)\cdot\deg(f_2)$ with distinct roots, then the conditions of Theorem \ref{slm2} are all true.  In particular, $I = \langle f_1,f_2\rangle$ has a Shape Lemma, $I = \langle \Rf(x_2),p_1(x_1,x_2)\rangle$, and $I \cap \K[x_2] =  \langle \Rf(x_2)\rangle$.  
\end{proposition}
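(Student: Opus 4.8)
The plan is to verify condition (2) of Theorem \ref{slm2}, namely $I\cap\K[x_2]=\langle\Rf(x_2)\rangle$ and $\gcd(\Rf(x_2),s_0(x_2))=1$; since all three conditions of that theorem are equivalent, this yields every assertion of the proposition. Write $\delta_i=\deg(f_i)$ and $d_i=\deg_{x_1}(f_i)$, so $\rho=d_1+d_2-2\ge 1$ forces $d_1+d_2\ge 3$, and note that $\Rf(x_2)$ is nonzero since it has degree $\delta_1\delta_2\ge 1$.

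I would begin with two preliminary reductions. First, $\Rf(x_2)\ne 0$ implies $f_1,f_2$ have no common factor involving $x_1$; and if they had a nonconstant common factor $h(x_2)$, multiplicativity of the resultant would give $h(x_2)^{d_1+d_2}\mid\Rf(x_2)$, which (as $d_1+d_2\ge 2$) contradicts $\Rf$ having distinct roots. Hence $f_1,f_2$ are coprime in $\K[x_1,x_2]$, so $I$ is zero-dimensional and the hypotheses of Theorem \ref{slm2} hold. Second, since $f_i^h(0,x_1,x_2)=\lc_{x_1}(f_i)\,x_1^{d_i}$ is nonzero, $x_0$ is not a factor of $f_i^h$; combined with the coprimality of $f_1,f_2$ this shows $f_1^h,f_2^h$ is a regular sequence, so $\V(f_1^h,f_2^h)$ is finite by Proposition \ref{finiteRegSeq} and Theorem \ref{affResm} is available.

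The crux is to prove that $f_1,f_2$ have no solution at $\infty$, and here I would exploit the isobaric (weighted-homogeneity) property of the resultant. Writing $f_i=\sum_j a_{ij}(x_2)\,x_1^j$ we have $\deg_{x_2}a_{ij}\le\delta_i-j$. Now $\Res_{d_1,d_2}$ is homogeneous of degree $d_2$ in the coefficients of the first argument and of degree $d_1$ in those of the second, and is isobaric of total weight $d_1d_2$ when the coefficient of $x_1^j$ receives weight $j$; hence every monomial occurring in $\Rf(x_2)=\Res_{d_1,d_2}(f_1^h,f_2^h)$ has $x_2$-degree at most $d_2\delta_1+d_1\delta_2-d_1d_2=\delta_1\delta_2-(\delta_1-d_1)(\delta_2-d_2)$. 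Since $\deg_{x_2}\Rf=\delta_1\delta_2$ by hypothesis, this forces $(\delta_1-d_1)(\delta_2-d_2)=0$; say $\delta_1=d_1$. Then any monomial $x_1^{d_1}x_2^k$ of $f_1$ has total degree $d_1+k\le\delta_1=d_1$, so $k=0$ and $\lc_{x_1}(f_1)\in\K^\times$ is a nonzero constant. Therefore $\gcd(\lc_{x_1}(f_1),\lc_{x_1}(f_2))=1$, and by the description of solutions at $\infty$ in \eqref{n2solinfty}, $f_1,f_2$ have no solution at $\infty$.

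It remains to assemble the conclusion. Let $I\cap\K[x_2]=\langle r(x_2)\rangle$; then $r\mid\Rf$. Conversely, if $\lambda$ is a root of $\Rf$, the universal property of the resultant produces a common projective zero of $f_1^h(\cdot,\lambda),f_2^h(\cdot,\lambda)$, which must be affine since there are no solutions at $\infty$; hence $\lambda$ is a root of $r$. As $\Rf$ is squarefree, it follows that $\langle r\rangle=\langle\Rf\rangle$, i.e.\ $I\cap\K[x_2]=\langle\Rf(x_2)\rangle$. For the gcd condition, fix a root $\lambda$ of $\Rf$; by Theorem \ref{affResm} the multiplicity of $\lambda$ in $\Rf$ equals the sum of the $m_\xi$ over the points $\xi$ of the fiber of $\V(f_1^h,f_2^h)\to\Aff_\K^1$ above $\lambda$, and squarefreeness of $\Rf$ forces this fiber to be a single reduced point $\xi$; moreover $\xi_0\ne 0$ because there are no solutions at $\infty$, so the monomial $x_0^\rho$ is nonzero at $\xi$. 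Proposition \ref{slm} (applied with the monomial $x_0^\rho$, whose scalar subresultant is $s_0$) then gives $s_0(\lambda)\ne 0$. Since this holds for every root of $\Rf$, we conclude $\gcd(\Rf(x_2),s_0(x_2))=1$, completing the verification of condition (2) of Theorem \ref{slm2}. The main obstacle is the isobaric degree estimate: once it yields $\delta_j=d_j$ for some $j$, hence a constant $x_1$-leading coefficient, the absence of solutions at $\infty$ and then both parts of condition (2) follow from results already in place.
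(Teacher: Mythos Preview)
Your proof is correct. It shares with the paper the crucial degree estimate $\deg \Rf(x_2)\le \delta_1\delta_2-(\delta_1-d_1)(\delta_2-d_2)$ (which you derive from the isobaric property of $\Res_{d_1,d_2}$, and which the paper attributes to B\'ezout's 1764 paper), yielding $\delta_j=d_j$ for some $j$ and hence no solutions at $\infty$. From there the two arguments diverge: the paper verifies condition (1) of Theorem~\ref{slm2} by invoking B\'ezout's theorem in $\PP^2_\K$ to count solutions with multiplicity, concluding that all $\delta_1\delta_2$ solutions are simple with distinct $x_2$-coordinates, and then applying Lemma~\ref{singsshape1} to obtain the Shape Lemma. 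You instead verify condition (2): the equality $I\cap\K[x_2]=\langle\Rf(x_2)\rangle$ follows directly from root comparison and squarefreeness, and for $\gcd(\Rf,s_0)=1$ you use the Poisson formula (Theorem~\ref{affResm}) to see that each fiber over a root of $\Rf$ is a single reduced point, then invoke Proposition~\ref{slm}. Your route has the advantage of staying entirely within the paper's own machinery and avoiding the external appeal to B\'ezout's theorem in the projective plane; the paper's route is perhaps more geometric and makes the Shape Lemma visible directly rather than via the equivalence in Theorem~\ref{slm2}.
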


\begin{proof}
We first show that $f_1,f_2$ are relatively prime.  The nonvanishing of $\Rf(x_2)$ implies that $f_1,f_2$ cannot have a common factor that involves $x_1$.  But a common factor involving only $x_2$ would appear with exponent $d_1+d_2 = \rho+2\ge 3$ in the resultant, which is impossible since $\Rf(x_2)$ has distinct roots.  It follows that $I = \langle f_1,f_2\rangle$ is zero-dimensional.
 
Let $M_i = \deg(f_i)$ and recall that $d_i = \deg_{x_1}(f_i)$. In this case, B\'ezout \cite{Bez64} proved in 1764 that
\[
\deg(\Rf(x_2)) \le M_1 d_2 + M_2 d_1 - d_1d_2 = M_1M_2 - (M_1-d_1)(M_2-d_2).
\]
Since the degree in the left is $M_1M_2$ by hypothesis, we must have either $M_1 = d_1$ or $M_2 = d_2$.  If $M_1 = d_1$, then $\lc(f_1,x_1)$ is a nonzero constant, which implies that $\gcd(\lc(f_1,x_1),\lc(f_2,x_1)) = 1$.  Similarly, $\gcd(\lc(f_1,x_1),\lc(f_2,x_1)) = 1$ when $M_2 = d_2$.  It follows that there are no solutions at $\infty$.  
 
The universal property of the resultant implies that the $M_1M_2$ distinct roots of $\Rf(x_2)$ extend to solutions in $\PP^1 \times_\K \Aff_\K^1$.  There are no solutions at $\infty$ since $\gcd(\lc(f_1,x_1),\lc(f_2,x_1)) = 1$, so that the $M_1M_2$ distinct roots of $\Rf(x_2)$ extend to solutions of $f_1 = f_2 = 0$.   Thus we have $M_1M_2$ elements of $\V(f_1,f_2)$ with distinct $x_2$-coordinates. If we homogenize $f_i(x_1,x_2)$ to $F_i(x_1,x_2,x_3)$, B\'ezout's theorem for $\PP^2_\K$ implies that $F_1 = F_2 = 0$ has $M_1M_2$ solutions in $\PP^2$, counting multiplicity.  Yet we just constructed $M_1M_2$ affine solutions in $\K^2$ with  distinct $x_2$-coordinates.  Thus:
\begin{itemize}
\item All solutions of $F_1 = F_2 = 0$ lie in $\K^2$ and have multiplicity one.
\item The solutions in $\V(f_1,f_2)$ have distinct $x_2$-coordinates. 
\end{itemize}
The second bullet shows that the injectivity hypothesis of Lemma \ref{singsshape1} is satisfied.  Furthermore, if we let $I \cap \K[x_2] = \langle r(x_2)\rangle$, then $r(x_2)$ divides $\Rf(x_2)$.  Since the latter has distinct roots, the same is true for $r(x_2)$, so its roots all have multiplicity one.  All solutions of $f_1 = f_2 = 0$ have multiplicity one by the first bullet, so that $I$ has a Shape Lemma by Lemma \ref{singsshape1}.  We showed above that there are no solutions at $\infty$, so condition (1) of Theorem \ref{slm2} is satisfied, and we are done.
\end{proof}

\begin{example}
\label{sec5lastex}
Let $f_1 = x_1^2+x_2^3$ and $f_2 = 1+x_2 + x_1^3$ in $\C[x_1,x_2]$.  Then one computes that
\[
\Rf(x_2) = x_2^9 + x_2^2 + 2x_2+1.
\]
This polynomial has degree $9 = \deg(f_1)\cdot\deg(f_2)$ and discriminant $384126317 \ne 0$.  By Proposition \ref{sec5prop}, $I = \langle f_1,f_2\rangle$ has a Shape Lemma,  $I = \langle \Rf(x_2),p_1(x_1,x_2)\rangle$, and $I \cap \C[x_2] =  \langle \Rf(x_2)\rangle$.  
\end{example}

\end{document}